\title{A Driven Tagged Particle in Symmetric Exclusion Processes with Removals}
\author{Zhe Wang\thanks{
		Courant Institute}        
}
\date{Oct 8, 2019}
\newtheorem{theorem}{Theorem}[section]
\newtheorem{corollary}{Corollary}[section]
\newtheorem{lemma}{Lemma}[section]
\newtheorem{remark}{Remark}
\numberwithin{equation}{section}
\newenvironment{proof}{{\sc Proof}}{~\hfill $\square$}
\def\mathbi#1{\textbf{\em #1}}
\def\abs#1{\left\vert #1 \right\vert}
\begin{document}
	\newpage
	\maketitle
	
	\begin{abstract}
			We consider a driven tagged particle in a symmetric exclusion process on $\mathbb{Z}$ with a removal rule. In this process, untagged particles are removed once they jump to the left of \textcolor{black}{the} tagged particle. We investigate the behavior of the displacement of the tagged particle and prove limit theorems of it: an (annealed) law of large numbers, a central limit theorem, and a large deviation principle. \textcolor{black}{We also characterize a class of ergodic measures for the environment process. Our approach is based on analyzing two auxiliary processes with associated martingales and a regenerative structure.} 
	\end{abstract}

\section{Introduction}
The symmetric exclusion process (SEP) on the lattice $\mathbb{Z}^d$ with a driven tagged particle can be described as: a collection of red particles and a tagged green particle performing continuous random walks on $\mathbb{Z}^d$ with an exclusion rule. There is at most one particle at each site. Particles have independent exponential clocks: the rate for a red particle is $\lambda=\sum_z p(z)$ and for the tagged particle is $\beta=\sum_z q(z)$. When its clock rings, a particle at site $x$ jumps to a vacant site $x+z$ with probability $\frac{p(z)}{\lambda}$ or $\frac{q(z)}{\beta}$ depending on its color, and the jump is suppressed if the site $x+z$ is occupied. \textcolor{black}{The word "symmetric" stands for the case when $p(z)=p(-z)$ for all $z$. When $q(\cdot)$ is different from $p(\cdot)$, we say the tagged particle is a driven particle. In this article, we add a removal rule that red particles are removed once they jumps to the left of the tagged particle. We will denote by $X_t$ the displacement $X_t$ of the tagged particle, and study the long term behavior of $X_t$.}

\textcolor{black}{This model is a variant of a classical problem: the SEP with a tagged particle. In the classical problem, $p(\cdot)=q(\cdot)$ is symmetric, and there are no removals of red particles. Limit theorems of $X_t$ has been studied \cite{Ar, J, KV, Sa}. A starting point is to consider the environment process $\xi_t$ viewed from the tagged particle, and write $X_t$ as a sum of a martingale and an additive functional in terms of $\xi_t$. The Bernoulli measures $\mu_\rho$ with parameters $\rho$ are reversible and ergodic for $\xi_t$. As a consequence, one can obtain a law of large numbers for $X_t$ \cite{Sa}. The fluctuation of $X_t$ in equilibrium is subdiffusive with a scale $t^{\frac{1}{2}}$ (the variance grows in $t^{\frac{1}{2}}$, and this corresponds to the slow-down phenomenon) when ${p(\cdot)=q(\cdot)}$ is nearest-neighbor in dimension $d=1$\cite{Ar}, and diffusive in other symmetric, finite-range cases \cite{KV}. A powerful method, proposed by Kipnis and Varadhan \cite{KV}, is to study the additive functionals of reversible Markov processes by martingale approximations. This method has also been extended to asymmetric cases \cite{SVY,Va}. For long-range jump rates $p(\cdot)=q(\cdot)$, there is a different scaling limit characterized by a Levy process \cite{J}.} 

%

The nearest-neighbor case in dimension $d=1$ is special. Particles are trapped and orders are preserved. The displacement $X_t$ can be considered jointly with the current through the bond $(0,1)$. On the other hand, we can construct the nearest-neighbor SEP with two other processes: the stirring process and the zero-range process. The former enables us to see negative correlations in the nearest-neighbor SEP \cite{Ar}, and the latter enables us to apply hydrodynamic limit results of the zero-range process to study the nearest-neighbor SEP. \textcolor{black}{Under the scale $t^{\frac{1}{2}}$, it's possible to obtain a nonequilibrium central limit theorem for $X_t$ by proving a joint central limit theorem for the current and density field \cite{JL}, and also to show a large deviation principle for the current and $X_t$ jointly \cite{SV}, see \cite{SV} for reviews. In fact, applying similar ideas to study the driven tagged particle case also works when $d=1$ and $p(\cdot), q(\cdot)$ are nearest-neighbor. In \cite{LOV} Landim, Olla, and Volchan showed that $X_t$ grows as $t^{\frac{1}{2}}$ and there is an Einstein relation for $X_t$ under scale $t^{\frac{1}{2}}$. They conjectured
	that $X_t$ grows linearly in t when the drift
	$\sum z \cdot q(z) \neq 0$ and $p(\cdot)$ is not nearest-neighbor in dimension $d=1$ or general in $d\geq 2$.}

	\textcolor{black}{This conjecture is partially proved when $d\geq 3$ but it remains open for $d\leq 2$. A major technical difficulty is to characterize some invariant measure for the environment process $\xi_t$. Due to asymmetry from different $p(\cdot)$ and $q(\cdot)$, it seems impossible to compute invariant measures explicitly other than trivial Bernoulli measures $\mu_\rho$, $\rho=0,1$. It is also unclear whether there are different invariant measures corresponding to different values of $\rho$. When $d\geq 3$, this difficulty can be overcome by comparing some invariant measure with the Bernoulli measure using large deviation estimates for the SEP and transient estimates \cite{Lo}. This is a perturbation argument. In \cite{KO}, a variant model of \cite{LOV} was considered in dimension $d=1$ with another perturbation argument. In this variant model, there are also annihilation and creation of red particles. A full expansion of an ergodic measure for $\xi_t$ is possible from the spectral gape property of the unperturbed system. With this expansion, one can show a law of large numbers and Einstein relation for $X_t$.}

\textcolor{black}{In this article, we study another variant of the SEP with a driven tagged particle \cite{LOV} in dimension $d=1$ when a removal rule is added, i.e. red particles are removed once they jump to the left of the tagged particle. We also assume that $q(\cdot)$ has only jumps towards right of size 1, $p(\cdot)$ is symmetric with jump sizes up to 2. We will obtain limit theorems of $X_t$ under the scale $t$ with explicit formulas (Theorem \ref{Thm: LLN for X_t}, \ref{Thm: CLT for X_t}, \ref{thm: LDP}). The main idea is to take the point of view of the tagged particle. We will construct an auxiliary process and study its environment process $\eta_t$ viewed from the tagged particle. Remarkably, there is an (implicit) ergodic measure $\nu_e$ for $\eta_t$ and we can find its marginal distribution at site 1 explicitly. This ergodic measure is central in our analysis. Firstly, we derive the law of large numbers of $X_t$ explicitly from the knowledge of the marginal distribution. Secondly, the rate function for large deviations of $X_t$ can be interpreted as the cost for perturbing the dynamics of $\eta_t$ near site one. The cost is in terms of some quantities of $\eta_t$ at site one, with asymptotic behaviors described by the ergodic measure for the perturbed dynamics. By pairing ergodic measures with perturbations in a specific way, we can derive the rate function in a closed form. Thirdly, the ergodic measure always gives us a regenerative structure for $\eta_t$ characterized by a regeneration time $\tilde{\tau}$. Moments of $\tilde{\tau}$ are hard to estimate but those of a variant are easier. We can look at a second auxiliary process and consider a regenerative time $\tau$ similar to $\tilde{\tau}$. Under an additional drift condition, we can compute moment generating functions of regeneration time $\tau$ and $X_\tau$, and deduce a central limit theorem for $X_t$ from them. Lastly, we can also characterize ergodic measures for $\xi_t$ from $\nu_e$, and these measures correspond to different $\rho$, see Remark \ref{rm: ergodic measure}.}
	
	\textcolor{black}{Due to the removal of the red particles, the tagged particle is the left-most particle. We can consider this model in two contexts: random walks in dynamical random environments (RWDRE), and interacting particle systems with moving boundaries. We briefly review some related works.
}	

\textcolor{black}{The point of view of the particle has been successful in showing limit theorems in several RWDRE models. To take this point of view, we need to show ergodicity or mixing properties of the environment process. Transferring mixing properties from the random environment to environment process (these two are different) is possible in some problems. For example, we refer to the models \cite{KO,Lo} mentioned earlier, \cite{A,RV} and the references therein. This will not be the approach in our model for two reasons. Firstly, the 1-D SEP environment has slowly decaying space-time correlations \cite{HS} (or see Lemma 2 \cite{Lo}). The density of red particles in a large box can stay close to $0$ or $1$ for a long time, which may cause the tagged particle to move at different speeds for different time intervals. Secondly, if we take the point of view of the environment, the tagged particle (or boundary) is moving towards right, and the environment has only a trivial invariant measure.}

	\textcolor{black}{An alternative approach is to show a regenerative structure for the environment process. This approach works in some front propagation models, such as \cite{BR,CQR07,CQR09,JMR}. In these models, two types of particles follow symmetric exclusion process or symmetric random walks with rates $D_A$ and $D_B$ on $\mathbb{Z}$. Type B particles are converted to type A particles up on contact or according to certain rule. The front is the right-most type A particle. In \cite{BR,CQR07,CQR09,JMR}, regenerative structures are observed for the environment process viewed from the front with good tail estimates, from which it follows that there is a unique invariant measure. A law of large numbers and a central limit theorem for the front also follow. This is very similar to our model. Briefly speaking, if the front moves ballistically while most particles move diffusively, one  expects regeneration times after which the trajectory of the front is decoupled from those of the particles around it. In our model, we can achieve this by showing that the average number of particles which affect the tagged particle has a finite mean, while the other particles are distributed according to independent Bernoulli random variables (see Lemma \ref{lm: finite N_b, N_p}). In fact, these front propagation models are believed to be microscopic versions of reaction-diffusion equations, one of which is the F-KPP equation by Fisher, and Kolmogorov,
		Petrovsky and Piscounov. It is known that they exhibit traveling wave solutions. We refer to \cite{BR,CQR07,KRS} and the references therein. Meanwhile,  
		fronts or boundaries can have different scaling limits due to different local mechanisms around the boundaries. For example, one can derive hydrodynamic limits under diffusive scaling for models \cite{LOV}, discrete atlas model \cite{HJV}, Stefan's melting and freezing
		problem \cite{LV}, and free boundary problems \cite{CdGP,DFP}, and they often correspond to Stefan problems \cite{CdGP,DFP,LOV,LV}.} 
	
	\textcolor{black}{Besides front propagation problems, regenerative structures have also been applied to random walks on interacting particle systems recently, such as \cite{ASV,dHdS,HdFSVST,HS,MV}. The model, a random walk on a simple symmetric exclusion process, is one of the most related ones to ours. Scaling limits have been derived in \cite{AJV,ASV,HKT,HS}. In this model, a tagged particle performs a (continuous-time or discrete-time) simple random walk with rates depending on the occupancy of red particles at its location, while red particles follow SEP. It is similar to our model except that red particles are not affected by the tagged particle, jump rates for the random walk depend on their location instead of their targets, and there is no removal of red particles. Regenerative structures are observed in \cite{ASV,HS} when the rates of random walk satisfy conditions on drift and ellipticity. In general, conditions on drift and ellipticity are to ensure the tagged particle moves fast regardless presence of red particles. As most red particles move diffusively, we expect regenerative times. In \cite{HKT}, it is shown that the drift condition can be removed for the discrete-time random walk case for law of large numbers, and a central limit theorem will hold for $X_t$ if its speed is nonzero. Till now, the problem remains open when jump rates are non-elliptic, or when jumps are completely suppressed on red particles.}
		
		\textcolor{black}{Large deviation principles have been obtained for some fairly general RWDRE. 
		 For random walks on interacting particle system, there are also results, such as \cite{A,AJV,BR1,HS}.  In some scenarios, we characterize the rate function with subadditive arguments, couplings and convex analysis. A drawback is that the formula is often implicit. In fact, we seldom have explicit formula for non-solvable models. A major issue is that the environment process usually depends on their parameters in a nontrivial way. In \cite{AJV} Avena, Jara and Vollering obtained an explicit LDP for $X_t$ when the random walk and SEP are jointly scaled appropriately. Their approach is via hydrodynamic limits and a perturbation argument. They can perturb the joint dynamics of the environment and the tagged particle so that the cost can be expressed in terms of the hydrodynamic limit of the perturbed system. Our work is in spirit similar to theirs, and we also get the rate function explicitly. A major difference is that in our case, the marginal distribution of ergodic measure is explicit and it depends on parameters around site 1. We can perturb the dynamics locally near the tagged particle with explicit cost in terms of local statistics, which greatly helps the analysis. Also, our tagged particle affects the environment.}


\section{Notations and Main Results} 
Since red particles are removed when they are on the left of the tagged particle, we consider only the red particles to the right of the tagged particle. A configuration $\xi(\cdot)$ on $\mathbb{Z}_+ = \mathbb{N}\setminus\{0\}$ indicates which sites are occupied relative to the tagged particle:  $\xi(x) = 1$ if site $x$ is occupied, and $\xi(x) = 0$ otherwise. The collection of all configurations $\mathbb{X}=\{0,1\}^{\mathbb{Z}_+}$ forms a state space for the process $\xi_t$. 

Local functions on ${\mathbb{Z}_+}$ are functions defined on $\mathbb{X}$ and they depend on finitely many $\xi(x)$. Examples of local functions are
$\xi_x$ and $\xi_A$: 
\begin{align}
\xi_x(\xi) =& \xi(x)\\
\xi_A(\xi) =& \prod_{x\in A} \xi(x), \text{ A is a finite set of } \mathbb{Z}_+
\end{align}
We denote by $\mathbi{C}$ the space of local functions on ${\mathbb{Z}_+}$, and denote by $\mathbf{M}_1$ the space of probability measures on $\mathbb{X}$.

The environment process $\xi_t$ starting from any initial configuration in $\mathbb{X}$ is a well-defined Markov processes. It is described by generator $\mathit{L}_d=\mathit{S}^{ex}_+ +\mathit{L}^{sh} + \mathit{L}^d$ on local functions, and the action of $\mathit{L}_d$ on any local function $f$ is given by:
\begin{align}
\mathit{L}_df(\xi) =& (\mathit{S}^{ex}_+ +\mathit{L}^{sh} + \mathit{L}^d) f(\xi) \notag \\
=&\sum_{x,y> 0}p(y-x)\xi_x\left( 1-\xi_y\right)\left(f(\xi^{x,y})-f(\xi)\right) \notag \\
&+\sum_{z}q(z)\left( 1-\xi_z\right)\left(f(\theta_z\xi)-f(\xi)\right) \notag \\
&\sum_{x > 0 >y }p(y-x)\xi_x \left(f(\xi^{x})-f(\xi)\right) 
\label{Eq:generator L_d}
\end{align}
where $\xi^{x,y}$ represents the configuration after exchanging particles at site $x$ and $y$ of $\xi$,	
\begin{equation} \label{usual exchange}
\xi^{x,y}(z) =
\begin{cases}
\xi(z) & \text{if }z \neq x,y \\
\xi(y) & \text{if }z = x \\
\xi(x) & \text{if }z = y. 
\end{cases}
\end{equation}
 $\theta_z\xi$ represents the configuration shifted by $-z$ unit due to the jump of the tagged particle to an empty site at $z$,
\begin{equation} \label{usual translation}
(\theta_z\xi)(x) =
\begin{cases}
\xi(x+z) & \text{if }x \neq -z \\
\xi(z) & \text{if }x = -z. 
\end{cases}
\end{equation} 
and  $\xi^{x}$ represents the configuration after changing the value at  site $x$,
\begin{equation} \label{change sign at site x}
\xi^{x}(z) =
\begin{cases}
\xi(z) & \text{if }z \neq x \\
1-\xi(z) & \text{if }z = x. 
\end{cases}
\end{equation}

We denote by  $\mathbb{P}^{\eta,d}$ the probability measure on the space of c\'{a}dl\'{a}g paths on $\mathbb{X}$ when the initial configuration is deterministic $\xi_0=\eta$.
  Let $\mathbb{P}^{\nu_0,d}=\int\mathbb{P}^{\eta,d}\, d\nu_0(\eta)$  when $\xi_0$ is distributed according to some measure $\mu$ on $\mathbb{X}$. We denote by $\mathbb{E}^{\nu_0,d}$ the expectation with respect to $\mathbb{P}^{\nu_0,d}$.

For the purpose of this paper, we will consider the case when the red particles can jump two steps, tagged particle can only jump to the right with one step, and the initial measures are Bernoulli product measures with parameters $\rho$ for the process. That is, $p(\cdot),q(\cdot)$ and $\nu_0$ satisfy
\begin{enumerate}[label=A\arabic*]
	\item (Range Two, Symmetric) $p(2)=p_2>0$, $ p(x)=0$ for $x>2$, and $p(x)= p(-x)$.
	\item (Right Nearest-neighbor Jump) $q(1)=q_1>0$, and $ q(x)=0$ otherwise.
	\item (Bernoulli Initial Measure) $\nu_0 = \mu_\rho$, which  is a product measure on $\mathbb{X}=\{0,1\}^{\mathbb{Z}_+}$, with marginals $<\mu_\rho,\eta_x> = \rho$ for all $x >0$. 
\end{enumerate}
 
The first theorem says that the tagged particle in the SEP with a removal rule has a speed determined by $\rho, p_2,q_1$ ($p_1$ is not involved):
 \begin{theorem}\label{Thm: LLN for X_t}
 	Consider a driven tagged particle in the SEP with removal rules. Assume jump rates $p(\cdot),q(\cdot)$ and initial measure $\nu_0$ satisfy assumptions A1,A2 and A3. Then the displacement $X_t$ of the tagged particle satisfies a law of large numbers with a speed $m =\frac{p_2q_1}{p_2 + \rho q_1 } $,
 	
 	\begin{equation}\label{Eq: LLN for X_t}
 	\lim_{t\to\infty}\frac{X_t}{t} = m=\left(\frac{1}{q_1} + \frac{\rho}{p_2}\right)^{-1}, \quad \mathbb{P}^{\mu_\rho,d} - a.s.
 	\end{equation} 
 \end{theorem}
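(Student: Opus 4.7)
The plan is to decompose $X_t$ into a predictable integral and a martingale, identify the long-run average of the integrand with the marginal $\nu(\xi(1)=1)$ of an invariant measure of the environment process via a mass-balance identity, and upgrade to almost-sure convergence through an $L^2$ argument based on two auxiliary processes.

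Step 1: by A2 the tagged particle only jumps to the right, by one site, at rate $q_1$ whenever site $1$ of the environment is empty, so
\[
X_t \;=\; q_1\int_0^t \bigl(1-\xi_s(1)\bigr)\,ds \;+\; M_t,
\]
with $M$ a square-integrable martingale of predictable bracket $\langle M\rangle_t \le q_1 t$. Doob's maximal inequality and a Borel--Cantelli argument along $t_n=n$ give $M_t/t\to 0$ almost surely under $\mathbb{P}^{\mu_\rho,d}$, reducing the theorem to showing that $t^{-1}\int_0^t(1-\xi_s(1))\,ds\to p_2/(p_2+\rho q_1)$ almost surely.

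Step 2: by compactness of $\mathbb{X}$ in the product topology, a subsequential Ces\`aro limit $\nu$ of the laws of $\xi_s$ under $\mathbb{P}^{\mu_\rho,d}$ exists and is invariant for $L_d$. A crucial consequence of A1--A2 is that the removal is localised at site $1$ (the only nonzero term in $L^d$ is from $x=1$ jumping two steps to the left, at rate $p_2$), so applying $L_d$ to the truncated count $N^L(\xi)=\sum_{x=1}^L\xi(x)$ gives a clean mass balance: the exclusion part $S^{ex}_+$ telescopes into a right-boundary current involving $\xi(L-1),\ldots,\xi(L+2)$; the shift $L^{sh}$ contributes $q_1(1-\xi(1))\xi(L+1)$; and $L^d$ contributes $-p_2\xi(1)$. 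Integrating $\mathbb{E}_\nu[L_d N^L]=0$ and passing $L\to\infty$ using (i) $\nu(\xi(x)=1)\to\rho$ as $x\to\infty$ and (ii) $\mathrm{Cov}_\nu(\xi(1),\xi(L+1))\to 0$, the identity collapses to
\[
q_1\bigl(1-\nu(\xi(1)=1)\bigr)\rho \;=\; p_2\,\nu(\xi(1)=1),
\]
yielding $\nu(\xi(1)=1)=\rho q_1/(p_2+\rho q_1)$ and hence $q_1(1-\nu(\xi(1)=1))=m$.

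Step 3: to upgrade this ensemble identity to a pathwise limit I would use the two auxiliary processes alluded to in the abstract, namely the free environment on $\mathbb{Z}_+$ with $\mu_\rho$-initial condition (no shift, no removal) and the environment process with the shift but no removal, and compare them to $\xi_t$ via the stirring/graphical representation of SSEP together with monotone couplings. This should furnish (i) and (ii) uniformly in the Ces\`aro limit $\nu$, pinning down $\nu(\xi(1)=1)$ and yielding an $L^2$ bound on the variance of $t^{-1}\int_0^t\xi_s(1)\,ds$; Borel--Cantelli along $t_n=n$ then upgrades ensemble to almost-sure convergence. The principal obstacle is precisely (i)--(ii): because removal breaks detailed balance and the shift breaks translation invariance, $\nu$ is not explicit, and extracting from the auxiliary couplings just the asymptotic density $\rho$ at large $x$ and the covariance decay under $\nu$, enough to close the mass balance and control the empirical variance, is the technical heart of the argument.
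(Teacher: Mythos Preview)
Your Step~1 is the same as the paper's. Your Step~2 mass balance is morally right and yields the correct equation for the site-$1$ marginal, but it rests entirely on claims (i) $\nu(\xi(x)=1)\to\rho$ and (ii) $\mathrm{Cov}_\nu(\xi(1),\xi(L+1))\to0$ under the invariant limit~$\nu$ of the \emph{original} environment process~$\xi_t$. You acknowledge these as the technical heart, but your proposed route to them---monotone couplings with a ``free'' environment (no shift, no removal) and a ``shift but no removal'' process---is not what the paper does, and it is not clear it would work: monotonicity between these processes and $\xi_t$ is delicate because the shift acts nonmonotonically on a fixed window, and there is no obvious sandwich that pins the far-field density at exactly~$\rho$.

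The paper's key idea, which your proposal misses, is to enlarge the state space rather than compare to simpler dynamics. One introduces a \emph{colored} auxiliary process $\eta_t\in\{b,p,w\}^{\mathbb Z_+}$ in which every site starts ``white'' carrying an unrevealed i.i.d.\ Bernoulli$(\rho)$ particle; a cup is colored only when the dynamics at site~$1$ (tagged-particle attempt or removal) forces the particle to be revealed. White cups then carry Bernoulli$(\rho)$ particles \emph{exactly}, not asymptotically, so the question ``is the far field at density~$\rho$?'' becomes ``is the number of non-white cups finite?'' The paper proves this via weighted sums: applying the generator to $W_b=\sum_i i\,\mathbb 1_{\{c(i)=b\}}$ produces a term $-q_1\,\mathbb 1_{\{c(1)\neq b\}}\,N_b$ plus bounded boundary pieces, giving the a~priori bound $\langle\nu_t,\mathbb 1_{\{c(1)\neq b\}}N_b\rangle\le C$ uniformly in~$t$; a second lemma upgrades this to $\langle\bar\nu,N_b\rangle<\infty$. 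With $N_b,N_p\in L^1(\bar\nu)$, the marginal equations at site~$1$ follow by applying the generator to $N_{b,r}=\sum_i r^i\mathbb 1_{\{c(i)=b\}}$ and sending $r\uparrow1$ (dominated convergence). This replaces your unproved (i)--(ii) by a concrete $L^1$ estimate on the number of revealed particles.

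Your Step~3 is also incomplete. The paper does not use an $L^2$ variance bound on the time average. Instead it passes through Choquet's theorem to select an \emph{ergodic} invariant measure $\bar\nu$ for the colored process within the convex set of invariant measures satisfying the $L^1$ bound; the ergodic theorem then gives the LLN under $\mathbb Q^{\bar\nu,d,1}$. To transfer this to the Bernoulli start, the paper argues that since $N_b+N_p<\infty$ $\bar\nu$-a.s., with positive probability all colored cups lie in $[1,L]$ and can be cleared in finite time without $X_t$ exceeding~$L$, so the all-white configuration (which is exactly the $\mu_\rho$ start) is reached with positive $\mathbb Q^{\bar\nu,d,1}$-probability; the LLN under $\mu_\rho$ then follows by the Markov property.
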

 
 \begin{remark}\begin{enumerate}
 		\item 	We can extend this result to the case where the symmetric jump rate $p(\cdot)$ has any finite support and the case where red particles are removed if they jump to the left of the tagged particle with a distance $D \geq 1$. There is a unique (implicit) speed for the tagged particle. The main assumptions are that $D<\infty$ and  $q(\cdot)$ has only right jumps. 
 		\item The speed formula \eqref{Eq: LLN for X_t} does not involve the parameter $p_1$. In fact, the large deviation rate function does not involve $p_1$ either, see \eqref{eq: rate function for joint} and \eqref{eq: rf from contraction principle}. This is special when $p(\cdot)$, $q(\cdot)$ satisfy assumptions A1, A2.
 	\end{enumerate}
 \end{remark}
%

 The second theorem says that if the tagged particle has a large enough speed, the displacement $X_t$ satisfies a functional central limit theorem:
  \begin{theorem}\label{Thm: CLT for X_t}
  	Under the assumption of Theorem \ref{Thm: LLN for X_t}, and if further, the speed $m$ is strictly larger than the drift $w$,
  	\[m= \left(\frac{1}{q_1}+\frac{\rho}{p_2}\right)^{-1} > p_1+3p_2 =w,\]
  	then there is a $\sigma >0$, such that under $\mathbb{P}^{\mu_\rho,d}$,
  	
  	\begin{equation}
		\left(\frac{X_{nt}- mnt}{\sqrt{n}}\right)_{t\geq 0} \Longrightarrow \sigma B_t
  	\end{equation} 
  	where $B_t$ is a standard Brownian motion.
  \end{theorem}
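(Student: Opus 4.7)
The plan is to derive the functional CLT from a regenerative decomposition of the trajectory into i.i.d.\ increments, as foreshadowed by the introduction, along the lines of Avena--Santos--V\"ollering \cite{ASV} and Huveneers--Simenhaus \cite{HS} for related driven walks in an exclusion environment, adapted to exploit the removal rule specific to this model.

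First I would construct a sequence of regeneration times $0 = \tau_0 < \tau_1 < \tau_2 < \cdots$ at which the process $(X_t,\xi_t)$ loses memory of its past. The removal rule is what makes this clean: since any red particle falling behind the tagged particle is erased, the post-$\tau$ evolution depends only on red particles currently to the right of $X_\tau$. A natural regeneration event at time $\tau$ then requires that every such red particle within a bounded interaction range of $X_\tau$ has never approached the tagged particle's trajectory, so that the restriction of $\xi_\tau$ near the origin is a fresh $\mu_\rho$-distributed configuration independent of $(X_s,\xi_s)_{s\leq\tau}$. I would formalize this by a coupling that tags each surviving red particle with its initial position and declares $\tau$ a regeneration time precisely when no such ``old'' red particle lies close enough to $X_\tau$ to influence the next increment.

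The hypothesis $m > w = p_1 + 3p_2$ is exactly what makes such regenerations frequent. The constant $w$ should be interpreted as an upper bound on the leftward propagation speed of a discrepancy front under the range-two symmetric exclusion generator $\mathit{S}^{ex}_+$: by a standard coupling one dominates the leftmost disagreement between two configurations by a range-two random walk whose drift does not exceed $w$. Since by Theorem \ref{Thm: LLN for X_t} the tagged particle moves to the right at asymptotic speed $m > w$, the event that at time $t$ no front of ``old'' particles has caught up with $X_t$ has probability bounded below uniformly in $t$. Combined with a restart argument and the martingale estimates underlying Theorem \ref{Thm: LLN for X_t}, this yields that the inter-regeneration increments $\tau_{n+1} - \tau_n$ and $X_{\tau_{n+1}} - X_{\tau_n}$ are i.i.d.\ with exponential moments.

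Once the regenerative structure with exponential tails is available, the functional CLT is routine. Writing
\[
X_{nt} - m\,nt \;=\; \sum_{i=1}^{N(nt)} \Big[(X_{\tau_i} - X_{\tau_{i-1}}) - m(\tau_i - \tau_{i-1})\Big] + R_{nt},
\]
with $N(u) = \max\{n : \tau_n \leq u\}$ and $R_{nt}$ a boundary term negligible on the scale $\sqrt{n}$, the summands are i.i.d.\ and centered by the identification $m = \mathbb{E}[X_{\tau_1} - X_{\tau_0}]/\mathbb{E}[\tau_1 - \tau_0]$ coming from Theorem \ref{Thm: LLN for X_t}. Donsker's invariance principle for renewal-reward processes then delivers the convergence with $\sigma^2 = \mathrm{Var}(X_{\tau_1} - m\tau_1)/\mathbb{E}[\tau_1 - \tau_0]$, and $\sigma > 0$ follows from the genuine randomness of a single regeneration increment. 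The principal obstacle is the first step: constructing $\tau_1$ and proving the exponential tail estimate. Because of the correlations in the exclusion dynamics, the ``fresh environment'' coupling is delicate, and one needs the precise quantitative version of the leftward propagation bound, which is exactly what dictates the explicit threshold $w = p_1 + 3p_2$.
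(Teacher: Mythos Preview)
Your high-level architecture---regenerative decomposition into i.i.d.\ increments with exponential tails, followed by a renewal/Donsker argument in the style of \cite{ASV}---matches the paper exactly, and your reading of $w=p_1+3p_2$ as the speed of a front that the tagged particle must outrun is the right intuition. The paper also cites Theorem~1.3 of \cite{ASV} for the final step, just as you suggest.

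Where the paper differs is in the concrete construction of the regeneration time and in how the exponential tails are obtained. Rather than a discrepancy-front coupling of the ``old versus new particle'' type you sketch, the paper uses the colour scheme of Section~\ref{sec: graphical construction and coloring}: sites to the right of a moving boundary $m_t$ carry \emph{unrevealed} i.i.d.\ Bernoulli$(\rho)$ particles (white cups), and $m_t$ is the rightmost revealed site. The boundary $m_t$ increases by $1$ at rate $p_1+p_2$ and by $2$ at rate $p_2$ (whenever a stirring arrow crosses it), so its upward drift is exactly $w=p_1+3p_2$; it decreases by $1$ whenever the tagged particle jumps. The regeneration time is simply $\tau=\inf\{t>0:m_t=0\}$, at which point the entire environment ahead is a fresh $\mu_\rho$ sample by Lemma~\ref{lm: Bernoulli particles}. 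This sidesteps the delicate ``fresh environment'' coupling you flag as the principal obstacle: unrevealed sites are \emph{always} a fresh Bernoulli sample, so no additional coupling is needed.

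For the exponential moments the paper does not use large-deviation bounds on a front, but instead writes down explicit exponential martingales (Lemma~\ref{lm: orthogonal exponential martingales}) in four counting processes associated with the colour transitions. A judicious choice of parameters collapses the compensator to a deterministic function $g(b)$ of a single variable, and one computes the moment generating function of $\tau$ in closed form (Lemma~\ref{lm: moment generating functions for tau}). The condition $m>w$ enters precisely as $g'(0)=1-w/m>0$, which guarantees analyticity of $b\mapsto\mathbb{E}[e^{g(b)\tau}]$ in a neighbourhood of the origin and hence finite exponential moments of $\tau$ (and then of $X_\tau$, since $X_t$ is dominated by a rate-$q_1$ Poisson process). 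Your route via a restart argument and propagation-speed estimates could presumably be made to work, but it would yield only existence of exponential moments, whereas the paper's martingale computation gives the moment generating functions of $\tau$ and $X_\tau$ explicitly.
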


The third theorem says that there is a good rate function $\mathbb{I}(\cdot)$, such that, the displacement $X_t$ satisfies the large deviation principle (LDP):
\begin{theorem}\label{thm: LDP}
Under the assumption of Theorem \ref{Thm: LLN for X_t}, the dislacement $X_t$ has the LDP with a good rate function $\mathbb{I}(\cdot)$. That is, for any Borel set $C \subset \mathbb{R}$,
\begin{align} -\inf_{y\in C^o} \mathbb{I}(y) &\leq \liminf_{y\to \infty}  \frac{1}{t} \ln \mathbb{P}^{{\mu}_\rho,d}\left(\frac{X_t}{t} \in C \right) \notag \\
&\leq \limsup_{t\to \infty}  \frac{1}{t} \ln \mathbb{P}^{{\mu}_\rho,d}\left(\frac{X}{t} \in C \right) \leq -\inf_{y\in \overline{C}} \mathbb{I}(y) .
\end{align}
Particularly, the rate function $\mathbb{I}(\cdot)$ can be computed explicitly. 
\end{theorem}

We briefly discuss the approaches to the results and the organization of the paper.

We first use the graphical construction of the symmetric exclusion process and construct auxiliary processes by using two color schemes. Due to symmetric jump rates $p(\cdot)$, we can view the $S^{ex}_+$ as interchanging information between sites, and the process of interchanging information is almost independent of the initial configuration $\xi_0$.  With a Bernoulli initial measure $\mu_\rho$, we can view every site start with a cup carrying a "Bernoulli" particle initially. Each "Bernoulli" particle is revealed due to $L^{sh}$, $L^d$, and the cup is colored according to revealing. By analyzing the colors of cups, we get different estimates. The auxiliary processes and color schemes will be introduced in section \ref{sec: graphical construction and coloring}.

The first color scheme allows us to get estimates for Theorem \ref{Thm: LLN for X_t} and Theorem \ref{thm: LDP}. In this auxiliary process, revealing and coloring happen only at attempts of jumps of the tagged particle and jumps of red particles towards the negative axis. \textcolor{black}{These jumps occur near site 1. We will see that there is an invariant measure, under which the total number of colored cups on positive axis with site 1 vacant is finite. This estimate enables us to show that this invariant measure is ergodic and to compute its marginal distribution at site 1 explicitly.} The related estimates and proof of Theorem \ref{Thm: LLN for X_t} will be done in section \ref{sec: LLN for displacement}. On the other hand, the large deviation principle can be viewed as a consequence of the law of large numbers and perturbation arguments. We will consider various jumps and empirical densities at site 1, and we will show their joint LDP. By the contraction principle, we can obtain the LDP for the displacement. This will be done in section \ref{sec: LDP}.

The second color scheme allows us to define a regeneration time and estimate its moments. We \textcolor{black}{will} define a boundary $m_t$, which increases with steps of sizes $1$ and $2$ at rates $p_1+p_2$ and  $p_2$, and decreases when the tagged particle jumps. Particles and cups on $(0,m_t]$ are revealed and colored, respectively, while particles and cups on $(m_t,\infty)$ remain unrevealed "Bernoulli" particles and white, respectively. The regeneration time $\tau$ is the first time when $m_t =0$. With the help of exponential martingales, we can compute the moment generating functions of $\tau$ and $X_\tau$, from which the functional central limit theorem for $X_t$ follows. This will be done in section \ref{sec: Regenerative Structure}.  

\section{Auxiliary Processes and Color Schemes}\label{sec: graphical construction and coloring}
\textcolor{black}{It is known that the displacement $X_t$ is the same as the number of shifts of the environment process $\xi_t$. In this section, we construct three auxiliary processes as variants of the environment process $\xi_t$ with the graphical construction. The second and third auxiliary processes are extensions of the first one and they corresponds to two color schemes. With these two auxiliary processes, we can get some martingales which allows us to obtain estimates for limit theorems of $X_t$. For computations in sections \ref{sec: LLN for displacement}, \ref{sec: Regenerative Structure}, and \ref{sec: LDP}, we will also mention the generators of the auxiliary processes.}

Due to symmetric jump rates $p(\cdot)$, we can rewrite the generator $S^{ex}_+$ as
\begin{align}
	 \mathit{S}^{ex}_+ f(\xi)
	 =&\sum_{x,y> 0}p(y-x)\xi_x\left( 1-\xi_y\right)\left(f(\xi^{x,y})-f(\xi)\right) \notag\\
	 =&\sum_{x>y> 0}p(y-x)\left(\xi_x\left( 1-\xi_y\right)+\xi_y\left( 1-\xi_x\right) \right)\left(f(\xi^{x,y})-f(\xi)\right) \notag \\
	 =&\sum_{x>y> 0}p(y-x)\left(f(\xi^{x,y})-f(\xi)\right) \label{eq: Symmetric part} 
\end{align} This also corresponds to the interchange (stirring) process. See \cite{ASV} or Chapter VIII.4 \cite{Li85}  for interchange process. \textcolor{black}{We can apply a similar idea to get an enriched system $\zeta_t = (c_t,l_t,\xi_t)$ of the original environment process $\xi_t$. Particularly, the $\xi_t$ in both systems are the same.}

\textcolor{black}{We construct a basic auxiliary process $\zeta_t = (c_t,l_t,\xi_t)$ with graphical construction} as follows. Consider a collection of cups labeled by their initial positions on $\mathbb{Z}_+$. \textcolor{black}{Initially, every cup is colored white, and it contains either a red or yellow particle, which represents an occupied site (1) or a vacant site (0). The colors of cups and particles inside will be changed at certain event times.} Let  $(\mathcal{N}_{x,y})_{x> y>0}, \mathcal{C}$ and $\mathcal{D}$ be a collection of independent Poisson processes with rates $(p(x,y))_{x>y>0}, q_1$ and $p_2$. 
At an event time $t$ of $\mathcal{N}_{x,y}$, we interchange the cups at sites $x$ and $y$ together with the particles they contain. An event time $t$ of $\mathcal{C}$ is an attempt of a jump of the tagged particle.  If there is a yellow particle at site 1, the jump is successful, and we remove everything at site 1 and shift the configuration to the left by 1; otherwise, we color the cup at site 1 by blue (b). At an event time $t$ of $\mathcal{D}$, we always replaced the particle at site 1 by a yellow particle, and color the cup purple (p). We denote by $c_t = (c_t(i))_{i>0}$ the colors of cups at each site, and by $l_t = (l_t(i))_{i>0}$ the labels of cups at each site. We denote by $\mathbb{Q}^{\nu_0,d}$ the corresponding probability measure for this auxiliary process when $\nu_0$ is the distribution of $\xi_0$.
See Figure 1 for an example. In this example, $\zeta_{t-}(1) = (w,5,1)$, $\zeta_{t-}(2) = (w,10,0)$, and $\zeta_{t-}(3) = (b,7,1)$.

\begin{figure}[h!]
	\begin{center}
\begin{tikzpicture} 
\draw[black, thick] (-4.3,0) -- (4.3,0) node[black,right] {$\zeta_{t-}$}; 
\draw [black,fill] (-4-0.1,0.1) rectangle (-4+0.1,-0.1) node [black,below=4] {Tagged}; 

\foreach \x in {-3,...,4}
\draw (\x,0) circle (0.1);

\foreach \x in {-3,2}
\draw [black,fill](\x,0) circle (0.1)
node [black,above=4]{w};

\foreach \x in {-1,3}
\draw [black,fill](\x,0) circle (0.1)
node [black,above=4]{b};

\foreach \x in {0,4}
\draw (\x,0) circle (0) node [black,above=4]{p};

\foreach \x in {-2,1}
\draw (\x,0) circle (0) node [black,above=4]{w};

\draw (-3,0) circle (0) node [black,below=4]{5};
\draw (-2,0) circle (0) node [black,below=4]{10};
\draw (-1,0) circle (0) node [black,below=4]{7};
\draw (0,0) circle (0) node [black,below=4]{8};
\draw (1,0) circle (0) node [black,below=4]{13};
\draw (2,0) circle (0) node [black,below=4]{2};
\draw (3,0) circle (0) node [black,below=4]{1};
\draw (4,0) circle (0) node [black,below=4]{6};

\end{tikzpicture}

\begin{tikzpicture}
\draw[black, thick] (-4.3,0) -- (4.3,0) node[black,right] {$\zeta_{t}$}; 
\draw [black,fill] (-4-0.1,0.1) rectangle (-4+0.1,-0.1) node [black,below=4] {Tagged}; 
\foreach \x in {-3,...,4}
\draw (\x,0) circle (0.1);

\foreach \x in {-3,2}
\draw [black,fill](\x,0) circle (0.1)
node [black,above=4]{w};

\foreach \x in {1,3}
\draw [black,fill](\x,0) circle (0.1)
node [black,above=4]{b};

\foreach \x in {0,4}
\draw (\x,0) circle (0) node [black,above=4]{p};

\foreach \x in {-2,-1}
\draw (\x,0) circle (0) node [black,above=4]{w};

\draw (-3,0) circle (0) node [black,below=4]{5};
\draw (-2,0) circle (0) node [black,below=4]{10};
\draw (-1,0) circle (0) node [black,below=4]{13};
\draw (0,0) circle (0) node [black,below=4]{8};
\draw (1,0) circle (0) node [black,below=4]{7};
\draw (2,0) circle (0) node [black,below=4]{2};
\draw (3,0) circle (0) node [black,below=4]{1};
\draw (4,0) circle (0) node [black,below=4]{6};
\draw[dashed,<->] (-1,0.7) to (-1,1.1) to (1,1.1)
to (1,0.7);
\end{tikzpicture}

\begin{tikzpicture}
\draw[black, thick] (-4.3,0) -- (4.3,0) node[black,right] {$\zeta_{t}$}; 
\draw [black,fill] (-4-0.1,0.1) rectangle (-4+0.1,-0.1) node [black,below=4] {Tagged}; 

\foreach \x in {-3,...,4}
\draw (\x,0) circle (0.1);

\foreach \x in {2}
\draw [black,fill](\x,0) circle (0.1)
node [black,above=4]{w};

\foreach \x in {-3,-1,3}
\draw [black,fill](\x,0) circle (0.1)
node [black,above=4]{b};

\foreach \x in {0,4}
\draw (\x,0) circle (0) node [black,above=4]{p};

\foreach \x in {-2,1}
\draw (\x,0) circle (0) node [black,above=4]{w};

\draw (-3,0) circle (0) node [black,below=4]{5};
\draw (-2,0) circle (0) node [black,below=4]{10};
\draw (-1,0) circle (0) node [black,below=4]{7};
\draw (0,0) circle (0) node [black,below=4]{8};
\draw (1,0) circle (0) node [black,below=4]{13};
\draw (2,0) circle (0) node [black,below=4]{2};
\draw (3,0) circle (0) node [black,below=4]{1};
\draw (4,0) circle (0) node [black,below=4]{6};
\draw[dashed,->]  (-3.5,1.1) to (-3.2,0.4);
\end{tikzpicture}

\begin{tikzpicture}
\draw[black, thick] (-4.3,0) -- (4.3,0) node[black,right] {$\zeta_{t}$}; 
\draw [black,fill] (-4-0.1,0.1) rectangle (-4+0.1,-0.1) node [black,below=4] {Tagged}; 

\foreach \x in {-3,...,4}
\draw (\x,0) circle (0.1);

\foreach \x in {2}
\draw [black,fill](\x,0) circle (0.1)
node [black,above=4]{w};

\foreach \x in {-1,3}
\draw [black,fill](\x,0) circle (0.1)
node [black,above=4]{b};

\foreach \x in {-3,0,4}
\draw (\x,0) circle (0) node [black,above=4]{p};

\foreach \x in {-2,1}
\draw (\x,0) circle (0) node [black,above=4]{w};

\draw (-3,0) circle (0) node [black,below=4]{5};
\draw (-2,0) circle (0) node [black,below=4]{10};
\draw (-1,0) circle (0) node [black,below=4]{7};
\draw (0,0) circle (0) node [black,below=4]{8};
\draw (1,0) circle (0) node [black,below=4]{13};
\draw (2,0) circle (0) node [black,below=4]{2};
\draw (3,0) circle (0) node [black,below=4]{1};
\draw (4,0) circle (0) node [black,below=4]{6};
\draw[dashed,->]  (-3,0.7) to (-3,1.0) to (-4.2,1.0);
\end{tikzpicture}
\caption{Configurations $\zeta$ before and after Event Times of $\mathcal{N}_{3,5}(t)$,$\mathcal{C}(t)$, and $\mathcal{D}(t)$}
\end{center}
\end{figure}
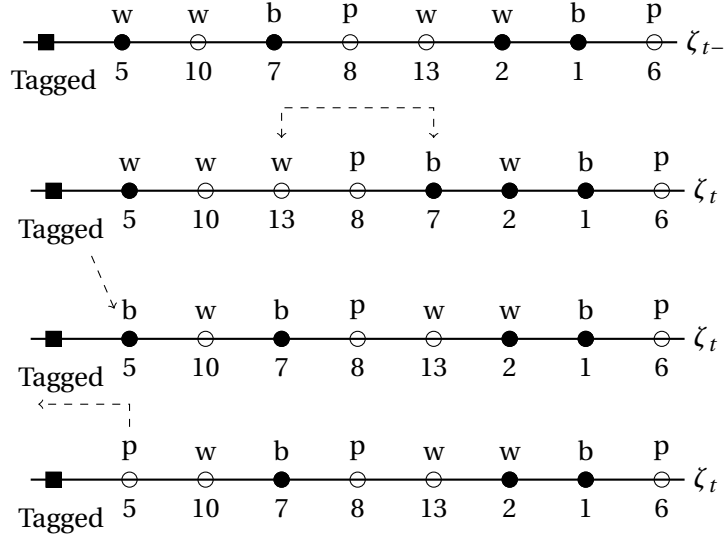
For this auxiliary process, \textcolor{black}{ the coloring of cups only occurs at site $1$ and at event times of $\mathcal{C}$ and $\mathcal{D}$. At any time $t$, a white cup with a label $j$ contains the same particle as it initially does.} Since the initial measure is a Bernoulli product measure, we can view white cups corresponding to independent "Bernoulli" particles:
\begin{lemma} \label{lm: Bernoulli particles}
	Consider the auxiliary process $\zeta_t$ with the initial configuration $\eta_0 = (c_0, l_0,\xi_0)$ such that 
	${c_0(i) = w}$, ${l_0(i)=i}$ for all $i$, and $\xi_0$ is distributed according to the Bernoulli measure $\mu_\rho$. For any finite set $A \subset \mathbb{Z}_+$, and any $t\geq 0$,
	\begin{equation}\label{eq:indep}
		\mathbb{Q}^{\mu_\rho,d}(\xi_A(t)=1 \arrowvert c_t(i)= w, \text{ for all } i \text{ in }A ) = \rho ^{\abs{A}}.
	\end{equation} 
\end{lemma}
\begin{proof}:
	\textcolor{black}{By the graphical construction}, at any time $t\geq 0$, for any $i,j$ 
	\begin{equation*}
		\xi_t(i) =1, l_t(i) = j, c_t(i)= w \Longleftrightarrow \xi_0(j) =1, l_0(j) =j, l_t(i) = j, c_t(i) = w 
	\end{equation*}
	A white cup at site 1 is always removed or colored at event times $s$ of $\mathcal{C},\mathcal{D}$. Therefore, any white cup remained at time $t$ is not at site 1 at any event time $s$ of $\mathcal{C}$ or $\mathcal{D}$ for $s\leq t$. As the particle in a white cup depends on its initial state \textcolor{black}{and Poisson processes are independent of $\xi_0$}, we have 
	\begin{align}
		&\mathbb{Q}^{\mu_\rho,d}\left(\xi_0(j) =1, l_0(j) =j, c_t(i) = w, \text{ for all } i \in A\right)\notag \\
		=&\mathbb{Q}^{\mu_\rho,d}\left(\xi_0(j) =1, l_0(j) =j,l_t(i) = j, c_t(i) = w, \text{ for all } i \in A\right)\notag \\
		=& \mu_\rho(\xi_0(j) =1, \text{ for all } i \in A )\cdot \notag \mathbb{Q}^{\mu_\rho,d}\left(l_0(j) =j,l_t(i) = j, c_t(i) = w, \text{ for all } i \in A\right) \notag \\
		=& \rho^{\abs{A}} \cdot \mathbb{Q}^{\mu_\rho,d}\left(l_0(j) =j,l_t(i) = j, c_t(i) = w, \text{ for all } i \in A\right)
	\end{align}
	Summing over j, we get \eqref{eq:indep}.
\end{proof}

\textcolor{black}{With the above lemma, we can get the distribution of $\xi_t$ from the distribution of colors of cups $c_t$ at time $t$, see Remark \ref{rmk: c_t,xi_t} at the end of this section. This enables us to ignore the labels and the particles inside cups and to consider only dynamics of the colors of cups.} We construct two further auxiliary processes with two different color schemes of cups. \textcolor{black}{In these auxiliary processes, non-white cups remain non-white.}

	In the first auxiliary process, we denote by $\eta_t$ = $(c_t(i),s_t(i))_{i>0}$ the colors of cups and types of particles. There are three colors of cups, white (w), blue (b), and purple (p). The color process $c_t$ is the same as that in the basic auxiliary process (by the graphical construction to be introduced). There are also three types of particles, "Bernoulli" particles (B), red particles ($1$), and yellow particles ($0$). The red particles and yellow particles are also called "revealed" Bernoulli particles. \textcolor{black}{We will describe how to color cups and reveal particles so that we can identify $\eta_t$ with $c_t$ (or $s_t$). 
		That is, if an initial configuration $\eta_0 = (c_0,s_0)$ satisfies for any $i \in \mathbb{Z}_+$, \begin{equation} \label{eq: color and states}
		"c_0(i)= p \Leftrightarrow s_0(i) = 0 "\text{ and } "c_0(i)= b \Leftrightarrow s_0(i) = 1",  
		\end{equation} the configuration $\eta_t$ at any latter time $t>0$ also satisfies \eqref{eq: color and states}. Particularly, this condition works for the initial configuration \begin{equation}\label{eq: initial condition for Ld,1}
		\eta_0(i)= (w,B) \text{ for all } i >0.
		\end{equation}
		Actually, this is direct if we do the following.} We have the same independent Poisson processes $(\mathcal{N}_{x,y})_{x> y>0}$, $\mathcal{C}$ and $\mathcal{D}$ as those in the basic auxiliary processes. At an event time $t$ of $\mathcal{N}_{x,y}$, $\eta_t(x)$ interchanges with $\eta_t(y)$. At an event time $t$ of $\mathcal{C}$, if $s_{t-}(1) = 0$, we shift $\eta_t$ to the left by 1: $\eta_t = \theta_1 \eta_{t-}$; if $s_{t-}(1) = B$, with a probability $\rho$, the Bernoulli particle is revealed as a red particle, and the white cup is colored blue: $\eta_t = C_{b,1} \eta_{t-}$, and with a probability $1-\rho$, the Bernoulli particle is revealed as a yellow particle, and we shift $\eta_t$ to left by 1: $\eta_t = \theta_1\circ C_{p,1} \eta_{t-}$; if $s_{t-}(1) =1$, we do nothing. At an event time $t$ of $\mathcal{D}$, the particle at site 1 is replaced by a yellow particle, and the cup is colored purple: $\eta_t = C_{p,1} \eta_{t-}$. The operators $C_{p,j}$ and $C_{b,j}$ are defined by:
		\begin{align}
		C_{p,j} \eta(i) =\begin{cases}
		(p,0) &,i=j, \\
		\eta(i) &, i\neq j .
		\end{cases} \label{eq: C_p}\\
		C_{b,j} \eta(i) =\begin{cases}
		(b,1) &,i=j, \\
		\eta(i) &, i\neq j .
		\end{cases} \label{eq: C_b,j}
		\end{align}
		See Figure 2 for an example. The dashed boxes represent concealed "Bernoulli" particles. In this example, $\eta_t = \eta^{3,5}_{t-}$, $C_{b,1}\eta_{t-}$, and $C_{p,1}\eta_{t-}$ respectively.
		
	\begin{figure}[h!]
		\begin{center}
			\begin{tikzpicture} 
			\draw[black, thick] (-4.3,0) -- (4.3,0) node[black,right] {$\eta_{t-}$}; 
			\draw [black,fill] (-4-0.1,0.1) rectangle (-4+0.1,-0.1) node [black,below=4] {Tagged}; 
			
			\foreach \x in {-2,1,-3,2}
			\draw [dashed,black] (\x-0.2,0.2) rectangle (\x+0.2,-0.2)
			 [dashed,black] (\x,0) circle (0)
			node [black,above=5]{w};
			
			\foreach \x in {-1,3}
			\draw [black,fill](\x,0) circle (0.1)
			node [black,above=4]{b};
			
			\foreach \x in {0,4}
			\draw (\x,0) circle (0.1) node [black,above=4]{p};

			\end{tikzpicture}
			
			\begin{tikzpicture}
			\draw[black, thick] (-4.3,0) -- (4.3,0) node[black,right] {$\eta_{t}$}; 
			\draw [black,fill] (-4-0.1,0.1) rectangle (-4+0.1,-0.1) node [black,below=4] {Tagged}; 
			
			\foreach \x in {-2,-1,-3,2}
			\draw [dashed,black] (\x-0.2,0.2) rectangle (\x+0.2,-0.2)
			[dashed,black] (\x,0) circle (0)
			node [black,above=5]{w};
			
			\foreach \x in {1,3}
			\draw [black,fill](\x,0) circle (0.1)
			node [black,above=4]{b};
			
			\foreach \x in {0,4}
			\draw (\x,0) circle (0.1) node [black,above=4]{p};
			\draw[dashed,<->] (-1,0.7) to (-1,1.1) to (1,1.1)
			to (1,0.7);
			\end{tikzpicture}
			
			\begin{tikzpicture}
			\draw[black, thick] (-4.3,0) -- (4.3,0) node[black,right] {$\eta_{t}$}; 
			\draw [black,fill] (-4-0.1,0.1) rectangle (-4+0.1,-0.1) node [black,below=4] {Tagged}; 
			
			\foreach \x in {-2,1,2}
			\draw [dashed,black] (\x-0.2,0.2) rectangle (\x+0.2,-0.2)
			[dashed,black] (\x,0) circle (0)
			node [black,above=5]{w};
			
			\foreach \x in {-1,3,-3}
			\draw [black,fill](\x,0) circle (0.1)
			node [black,above=4]{b};
			
			\foreach \x in {0,4}
			\draw (\x,0) circle (0.1) node [black,above=4]{p};
			\draw[dashed,->]  (-3.5,1.1) to (-3.2,0.4);
			\end{tikzpicture}
			
			\begin{tikzpicture}
			\draw[black, thick] (-4.3,0) -- (4.3,0) node[black,right] {$\eta_{t}$}; 
			\draw [black,fill] (-4-0.1,0.1) rectangle (-4+0.1,-0.1) node [black,below=4] {Tagged}; 
			
			\foreach \x in {-2,1,2}
			\draw [dashed,black] (\x-0.2,0.2) rectangle (\x+0.2,-0.2)
			[dashed,black] (\x,0) circle (0)
			node [black,above=5]{w};
			
			\foreach \x in {-1,3}
			\draw [black,fill](\x,0) circle (0.1)
			node [black,above=4]{b};
			
			\foreach \x in {0,4,-3}
			\draw (\x,0) circle (0.1) node [black,above=4]{p};
			\draw[dashed,->]  (-3,0.7) to (-3,1.1) to (-4.2,1.1);
			\end{tikzpicture}
			\caption{Configurations $\eta$ before and after Event Times of $\mathcal{N}_{3,5}(t)$,$\mathcal{C}(t)$ with Revealing as a Particle, and $\mathcal{D}(t).$}
		\end{center}
	\end{figure}
 \textcolor{black}{It is not hard to see from the graphical construction, $c_t$ in $\eta_t$ is the same process as $c_t$ in $\zeta_t$  when the initial configuration satisfies \eqref{eq: initial condition for Ld,1}.} We can also write the generator $\tilde{L}_{d,1}$ for this auxiliary process \textcolor{black}{from events at $(\mathcal{N}_{x,y})_{x> y>0}$, $\mathcal{C}$, and $\mathcal{D}$.} $\tilde{L}_{d,1} =\tilde{S}^{ex}_{+,1} + \tilde{L}^{sh} + \tilde{L}^{d}$ acts on a local function $f$ as
	\begin{equation} \label{eq: L_d,1}
		\tilde{L}_{d,1}f(\eta) =\left(\tilde{S}^{ex}_{+,1} + \tilde{L}^{sh} + \tilde{L}^{d}\right)f(\eta) 
		\end{equation} 
	\begin{equation}
		\label{eq: new S_^ex,1}
		\tilde{S}^{ex}_{+,1}f(\eta)=  \sum_{x>y> 0}p(y-x)\left(f(\eta^{x,y})-f(\eta)\right) 
		\end{equation}
	\begin{equation}		\label{eq: new L_^d}
	\tilde{L}^{d}f(\eta) = p_2 \cdot\mathbb{1}_{\{c(1)\neq p\}} \left(f(C_{p,1}\eta)-f(\eta)\right)
	\end{equation}
		\begin{align} 
		\label{eq: new L_^sh}
		 \tilde{L}^{sh} f(\eta)=& (1-\rho)\cdot q_1\cdot \mathbb{1}_{\{c(1)=w\}}\left(f(\theta_1\circ C_{p,1}\eta)-f(\eta)\right) \notag \\
		&+ \rho \cdot q_1\cdot \mathbb{1}_{\{c(1)=w\}}\left(f(C_{b,1}\eta)-f(\eta)\right) \notag \\
		&+ q_1\cdot \mathbb{1}_{\{c(1)=p\}}\left(f(\theta_1\eta)-f(\eta)\right) .	\end{align}
		\textcolor{black}{At this point, we should notice that $\rho$ is in the generator of $\tilde{L}_{d,1}$ but not in $\mathit{L}_d$. This plays a role in the proof of the LDP of $X_t$.}	
 \textcolor{black}{Meanwhile, the event times of $\mathcal{C}$ are exactly the times when the tagged particle attempts to jump, and a successful jump only occurs when the cup at site 1 is purple or when the cup is white and the "Bernoulli" particle is revealed as a yellow particle. In either case, a left shift of the configuration follows. Therefore, we can recover $X_t$ by tracing shifts of $c_t$ at event times of $\mathcal{C}$.}

The second auxiliary process $(\psi_t,m_t) = ((\tilde{c}_t(i), \tilde{s}_t(i))_{i>0},m_t)$ is similar to $\eta_t$. \textcolor{black}{We also want identify $\tilde{c}_t$ as $\tilde{s}_t$ by ensuring \eqref{eq: initial condition for Ld,1} holds for all time.} \textcolor{black}{We use $\tilde{c}_t, \tilde{s}_t$ to emphasize that the colors of cups and types of particles are different from $c_t$ and $s_t$. These differences are due to the additional revealing of "Bernoulli" particles with an artificial boundary process $m_t$.} \textcolor{black}{To summarize, the dynamics are the following. We reveal new "Bernoulli" particles and color white cups by increasing $m_t$. We remove revealed particles and purple cups at event times of $\mathcal{C}$. We change colors of cups at event times of $\mathcal{D}$.} 

\textcolor{black}{More precisely, we have the same independent Poisson processes $(\mathcal{N}_{x,y})_{x> y>0}$, $\mathcal{C}$, $\mathcal{D}$ and also an additional independent Poisson process $\mathcal{N}_{0,2}$ of rate $p_2$. This additional $\mathcal{N}_{0,2}$ is artificial, and it ensures $m_t$ increases in time homogeneously.} Let $m_t$ be the rightmost site with a non-white cup in $\psi_t$: ${m_t := \sup\{i:\tilde{c}(i)\neq w\}\vee 0}$, and every particle on sites $(0,m_t]$ is revealed. When $m_{t-}>0$ and at an event time $t$ of $\mathcal{N}_{x,y}$ with $0<x\leq m_{t-} <y$, we increase $m_t$ to $y$, reveal all "Bernoulli" particles on $(m_{t-},y]$ according to i.i.d Bernoulli trials, and color the cups accordingly. We then interchange cups and particles on sites $x$ and $y$. \textcolor{black}{Suppose the colors of cups and the types of particles on sites $(x,y]$ after revealing are $\hat{c}_{t-}$ and $\hat{s}_{t-}$ (which are different from $\tilde{c}_{t-}$ and $\tilde{s}_{t-}$ because of revealing), the new configuration at time $t$ is ${(\psi_t,m_t) = (\hat{c}^{x,y}_{t-},\hat{s}^{x,y}_{t-},y)}$.} When ${m_{t-}=1}$ and at an event time $t$ of $\mathcal{N}_{0,2}$, we increase $m_t$ to 2, reveal particles, and color cups accordingly, but we do not interchange particles or cups: $(\psi_t,m_t) = (\hat{c}_{t-},\hat{s}_{t-},2)$. At event times of $\mathcal{C}$ and $\mathcal{D}$, we use the same color scheme as the first auxiliary process. Lastly, we only decrease $m_t$ by 1 at an event time $t$ of $\mathcal{C}$ when a yellow particle is at site 1. For convenience, we initially reveal the particle at site 1 and set the rest sites with white cups containing independent "Bernoulli" particles:
\begin{equation}\label{eq: renewal process initial condition}
	\tilde{c}_0(i) = w, \tilde{s}_t(i)=B \text{ for all } i> 1, m_0=1
\end{equation} See Figures 3 and 4 for examples. In Figure 3, $m_{t-}=4$, $\psi_t = \left(C_{p,5}\psi_{t-}\right)^{3,5}$, $\theta_1 \psi_{t-}$ and $\psi_{t-}$ respectively. In Figure 4, $m_{t-}=1$, $\psi_t = \left(C_{b,2}\psi_{t-}\right)^{1,2}$, and $C_{b,2} \psi_{t-}$ respectively.
	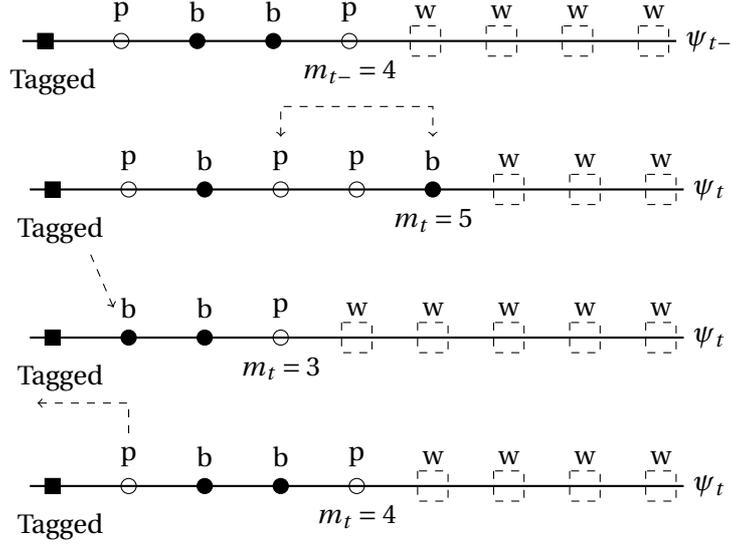
\begin{figure}[h!]
		\begin{center}
			\begin{tikzpicture} 
			\draw[black, thick] (-4.3,0) -- (4.3,0) node[black,right] {$\psi_{t-}$}; 
			\draw [black,fill] (-4-0.1,0.1) rectangle (-4+0.1,-0.1) node [black,below=4] {Tagged}; 
			\draw (0,0) circle(0) node [black,below=4] {$m_{t-}=4$};
			
			\foreach \x in {1,2,3,4}
			\draw [dashed,black] (\x-0.2,0.2) rectangle (\x+0.2,-0.2)
			[dashed,black] (\x,0) circle (0)
			node [black,above=5]{w};
			
			\foreach \x in {-1,-2}
			\draw [black,fill](\x,0) circle (0.1)
			node [black,above=4]{b};
			
			\foreach \x in {0,-3}
			\draw (\x,0) circle (0.1) node [black,above=4]{p};
			
			\end{tikzpicture}
			
			\begin{tikzpicture}
			\draw[black, thick] (-4.3,0) -- (4.3,0) node[black,right] {$\psi_{t}$}; 
			\draw [black,fill] (-4-0.1,0.1) rectangle (-4+0.1,-0.1) node [black,below=4] {Tagged}; 
			\draw (1,0) circle(0) node [black,below=4] {$m_{t}=5$};
			
			\foreach \x in {2,3,4}
			\draw [dashed,black] (\x-0.2,0.2) rectangle (\x+0.2,-0.2)
			[dashed,black] (\x,0) circle (0)
			node [black,above=5]{w};
			
			\foreach \x in {1,-2}
			\draw [black,fill](\x,0) circle (0.1)
			node [black,above=4]{b};
			
			\foreach \x in {-3,0,-1}
			\draw (\x,0) circle (0.1) node [black,above=4]{p};
			\draw[dashed,<->] (-1,0.7) to (-1,1.1) to (1,1.1)
			to (1,0.7);
			\end{tikzpicture}
			
			\begin{tikzpicture}
			\draw[black, thick] (-4.3,0) -- (4.3,0) node[black,right] {$\psi_{t}$}; 
				\draw [black,fill] (-4-0.1,0.1) rectangle (-4+0.1,-0.1) node [black,below=4] {Tagged}; 
				\draw (-1,0) circle(0) node [black,below=4] {$m_{t}=3$};
				
				\foreach \x in {0,1,2,3,4}
				\draw [dashed,black] (\x-0.2,0.2) rectangle (\x+0.2,-0.2)
				[dashed,black] (\x,0) circle (0)
				node [black,above=5]{w};
				
				\foreach \x in {-2,-3}
				\draw [black,fill](\x,0) circle (0.1)
				node [black,above=4]{b};
				
				\foreach \x in {-1}
				\draw (\x,0) circle (0.1) node [black,above=4]{p};
			\draw[dashed,->]  (-3.5,1.1) to (-3.2,0.4);
			\end{tikzpicture}
			
			\begin{tikzpicture}	\draw[black, thick] (-4.3,0) -- (4.3,0) node[black,right] {$\psi_{t}$}; 
			\draw [black,fill] (-4-0.1,0.1) rectangle (-4+0.1,-0.1) node [black,below=4] {Tagged}; 
			\draw (0,0) circle(0) node [black,below=4] {$m_{t}=4$};
			
			\foreach \x in {1,2,3,4}
			\draw [dashed,black] (\x-0.2,0.2) rectangle (\x+0.2,-0.2)
			[dashed,black] (\x,0) circle (0)
			node [black,above=5]{w};
			
			\foreach \x in {-1,-2}
			\draw [black,fill](\x,0) circle (0.1)
			node [black,above=4]{b};
			
			\foreach \x in {0,-3}
			\draw (\x,0) circle (0.1) node [black,above=4]{p};
			
			\draw[dashed,->]  (-3,0.7) to (-3,1.1) to (-4.2,1.1);
			\end{tikzpicture}
			\caption{Configurations $(\psi,m)$ before and after $\mathcal{N}_{3,5}(t)$ with Realization as a Hole, $\mathcal{C}(t)$ and $\mathcal{D}(t). $ Particularly, $m_{t-}>1. $}
		\end{center}
	\end{figure}
	
	\begin{figure}[h!]
		\begin{center}
			\begin{tikzpicture} 
			\draw[black, thick] (-4.3,0) -- (4.3,0) node[black,right] {$\psi_{t-}$}; 
			\draw [black,fill] (-4-0.1,0.1) rectangle (-4+0.1,-0.1) node [black,below=4] {Tagged}; 
			\draw (-2.7,0) circle(0) node [black,below=7] {$m_{t-}=1$};
			
			\foreach \x in {-2,...,4}
			\draw [dashed,black] (\x-0.2,0.2) rectangle (\x+0.2,-0.2)
			[dashed,black] (\x,0) circle (0)
			node [black,above=5]{w};
			
			\foreach \x in {}
			\draw [black,fill](\x,0) circle (0.1)
			node [black,above=4]{b};
			
			\foreach \x in {-3}
			\draw (\x,0) circle (0.1) node [black,above=4]{p};
			
			\end{tikzpicture}
			
			\begin{tikzpicture}
			\draw[black, thick] (-4.3,0) -- (4.3,0) node[black,right] {$\psi_{t}$}; 
			\draw [black,fill] (-4-0.1,0.1) rectangle (-4+0.1,-0.1) node [black,below=4] {Tagged}; 
			\draw (-2,0) circle(0) node [black,below=7] {$m_{t}=2$};
			
			\foreach \x in {-1,...,4}
			\draw [dashed,black] (\x-0.2,0.2) rectangle (\x+0.2,-0.2)
			[dashed,black] (\x,0) circle (0)
			node [black,above=5]{w};
			
			\foreach \x in {-3}
			\draw [black,fill](\x,0) circle (0.1)
			node [black,above=4]{b};
			
			\foreach \x in {-2}
			\draw (\x,0) circle (0.1) node [black,above=4]{p};
			\draw[dashed,<->] (-3,0.7) to (-3,1.1) to (-2,1.1)
			to (-2,0.7);
			\end{tikzpicture}
			
			\begin{tikzpicture}
			\draw[black, thick] (-4.3,0) -- (4.3,0) node[black,right] {$\psi_{t}$}; 
			\draw [black,fill] (-4-0.1,0.1) rectangle (-4+0.1,-0.1) node [black,below=4] {Tagged}; 
			\draw (-2,0) circle(0) node [black,below=4] {$m_{t}=2$};
			
			\foreach \x in {-1,...,4}
			\draw [dashed,black] (\x-0.2,0.2) rectangle (\x+0.2,-0.2)
			[dashed,black] (\x,0) circle (0)
			node [black,above=5]{w};
			
			\foreach \x in {-2}
			\draw [black,fill](\x,0) circle (0.1)
			node [black,above=4]{b};
			
			\foreach \x in {-3}
			\draw (\x,0) circle (0.1) node [black,above=4]{p};
			\draw[dashed,<->] (-4,0.7) to (-4,1.1) to (-2,1.1)
			to (-2,0.7);
			\end{tikzpicture}
			\caption{Configurations $(\psi,m)$ before and after $\mathcal{N}_{1,2}(t)$,  $
				\mathcal{N}_{0,2}(t)$ with Realizations as Particles. Particularly,$m_{t-}=1.$}
		\end{center}
	\end{figure}
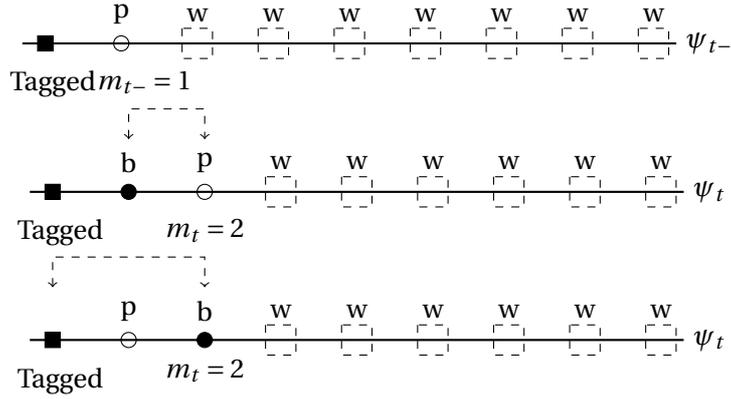	 

 There is a natural regeneration time 
\begin{equation}\label{eq: regeneration time}
\tau= \inf\{s>0: m_s =0\}.
\end{equation} At each $\tau$, there are only white cups with "Bernoulli" particles on positive sites. To have a regenerative structure, we always set $m_{\tau+} = m_{\tau}+1 = 1$, reveal the particle at site 1 with a Bernoulli trial, and color the cup accordingly: $\tilde{c}_{\tau+}(1) = b$ with probability $\rho$, and $\tilde{c}_{\tau+}(1) = p$ with probability $1-\rho$. 

\textcolor{black}{To get estimates of $\tau$ and $X_\tau$, we will consider the stopped process $(\bar{\psi}_t,\bar{m}_t) = (\tilde{c}_{t\wedge \tau}, \tilde{s}_{t\wedge \tau}, m_{t\wedge\tau})$.} It also has a generator $\tilde{L}_{d,2}$. We will not use it for computation, but we can also write it by the graphical construction. From event times at Poisson processes $(\mathcal{N}_{x,y})_{x>y>0}$, $\mathcal{C}$, $\mathcal{D}$, and $\mathcal{N}_{0,2}$, we have  $\tilde{L}_{d,2} =\tilde{S}^{ex}_{+,2} + \tilde{L}^{sh,2} + \tilde{L}^{d,2}$. It acts on a local function $f$ as: 

For $m>0$,
	\begin{equation} \label{eq: L_d,2}
	\tilde{L}_{d,2}f(\psi,m) =\left(\tilde{S}^{ex}_{+,2} + \tilde{L}^{sh,2} + \tilde{L}^{d,2}\right)f(\psi,m)  
	\end{equation}
	\begin{equation}
	\label{eq: new  2 L^sh}
	\tilde{L}^{sh,2} f(\psi,m)= q_1\cdot \mathbb{1}_{\{c(1)=p\}}\left(f(\theta_1\psi,m-1)-f(\psi,m)\right) 
	\end{equation}
	\begin{equation}		
	\label{eq: new 2 L_^d}
	\tilde{L}^{d,2}f(\psi,m) = p_2 \left(f(C_{p,1}\psi,m)-f(\psi,m)\right) 
\end{equation}
\begin{align}
	\label{eq: new  2 S^ex_2}
	\tilde{S}^{ex}_{+,2}f(\psi,m)=&  \sum_{m\geq y>x>0 }p(y-x)\left(f(\psi^{x,y},m)-f(\psi,m)\right)
	\notag\\
	 &+  \sum_{y>m \geq x>0} p(y-x)\sum_{\sigma\in T_{m+1,y}} r(\sigma) \left\lbrace  f\left(\left(\prod_{j=b+1}^y C_{\sigma(j),j}\psi\right)^{x,y},y\right)-f(\psi,m)\right\rbrace
	 \notag\\
	 &+  p_2 \mathbb{1}_{\{m=1\}} \cdot\sum_{\sigma\in T_{2,2}} r(\sigma) \left(  f\left( C_{\sigma(2),2}\psi,2\right)-f(\psi,1)\right)
	 \end{align}
	 where $T_{m,n} = \{b,p\}^{\{m,\dots,n\}}$, $r(\sigma) = \rho^{\sigma_b} (1-\rho)^{\sigma_p}$ and $\sigma_b, \sigma_p$ are the numbers of $b$ and $p$ in $\sigma$.
	  
	  For m= 0,
	  \begin{equation}
	  \tilde{L}_{d,2}f(\psi,0) = 0.
	  \end{equation}
Again, with \textcolor{black}{the graphical construction}, it is easy to see relation \eqref{eq: color and states} holds for the stopped process $(\bar{\psi}_t,\bar{m}_t)$ for any $t\geq 0$ given \eqref{eq: renewal process initial condition}.
	 
For these two auxiliary processes $\eta_t$ and  $(\bar{\psi}_t,\bar{b}_t)$, we denote by $\mathbb{Q}^{\mu_\rho,d,1}$ and $\mathbb{Q}^{\mu_\rho,d,2}$ the corresponding probability measures with initial conditions \eqref{eq: initial condition for Ld,1} and \eqref{eq: renewal process initial condition}, respectively. We denote by $\mathbb{E}^{\mu_\rho,d,1}$ and $ \mathbb{E}^{\mu_\rho,d,2}$ the corresponding expectations. Since relation \eqref{eq: color and states} holds for all $t\geq 0$, we will further identify $\eta_t$ and $ \bar{\psi}_t$ with their color processes $c_t$ and $\tilde{c}_{t\wedge \tau}$.

We end this section by mentioning some connections among the displacement $X_t$, the original environment process $\xi_t$, and auxiliary processes $\zeta_t$, $\eta_t$, $\psi_t$.
\textcolor{black}{
	\begin{remark}\label{rmk: c_t,xi_t}
		 By the graphical construction, we can recover the environment process $(\xi_s)_{s\leq t}$ from the color process $(c_s)_{s\leq t}$, Poisson event times $(\mathcal{N}_{x,y})_{x>y>0}$, $\mathcal{C}$, $\mathcal{D}$, and independent Bernoulli random variables. We can first recover the label process $(l_s)_{s\leq t}$ from the color process $(c_s)_{s\leq t}$ and event times, and then use Bernoulli random variables to reveal white cups which are not revealed by time $t$. A similar approach also works for $\tilde{c}_t$. However, we cannot recover $\xi_t$ from only $\eta_t$ or $\zeta_t$ without knowing the Bernoulli variables and their dynamics. Actually, we don't need to recover $\xi_t$ from $\eta_t$ or $\zeta_t$ because of the followings:
		\begin{enumerate}
		\item The displacement $X_t$ can be written as the number of left shifts of the environment process at event times of $\mathcal{C}$ by time $t$. 
		\begin{equation}\label{eq: displacent representation}
		X_t = \sum_{s\leq t} \mathbb{1}_{\{\xi_{s} = \theta_{1}\xi_{s-},\mathcal{C}(s-)\neq\mathcal{C}(s) \}},\end{equation} 
		which also holds if we replace $\xi_s$ by $\zeta_s$, $\eta_s$ or $\psi_s$. 
		This representation allows us to write different martingales in terms of $\eta_s$ and $\bar{\psi}_s$.
		Examples are a $\mathbb{P}^{\mu_\rho,d}$- martingale $X_t-\int_0^t q_1(1-\xi_s(1)\,ds$, a  $\mathbb{Q}^{\mu_\rho,d,1}$- martingale $X_t-\int_0^t f(\eta_s)\,ds$, and a $\mathbb{Q}^{\mu_\rho,d,1}$- martingale $\exp \left(X_t-\int_0^t (e-1)f(\eta_s)\,ds\right)$, where $f(\eta) = (1-\rho)q_1\mathbb{1}_{\{c(1)=w\}} + q_1\mathbb{1}_{\{c(1)=p\}}$.
		\item The distribution of $\xi_t$ at any fixed time $t$ can be derived from that of $c_t$, see Lemma \ref{lm: Bernoulli particles},
		\begin{align} \label{eq:corresponding distribution}
		\mathbb{P}^{\mu_\rho,d}\left(\xi_t(i)=1, \text{for all $i$ in A }\right) = \sum_{k=0}^{\abs{A}} \rho^k \mathbb{Q}^{\mu_\rho,d,1}\left(\sum_{i\in A} \mathbb{1}_{\{c_t(i)=b\}}=\abs{A}-k, \sum_{i\in A} \mathbb{1}_{\{c_t(i)=w\}}=k \right).
			\end{align} With this, we can get an invariant measure of $\xi_t$ from an invariant measure of $\eta_t$. We will see, for $\eta_t$, we can even derive an ergodic measure and compute its marginal distribution at site 1.
			\item In the proof of LDP for $X_t$, we need to perturb the processes. Although the graphical construction is more intuitive for construction, it contains more information than we need. Particularly, the Radon-Nikodym derivatives, see \eqref{eq: r-n derivative}, are much easier to handle when the reference $\sigma$-algebra is generated by $(\eta_s)_{s\leq t}$ instead of $(\mathcal{N}_{x,y})_{x>y>0}$, $\mathcal{C}$, $\mathcal{D}$ and $\xi_0$.  
		\end{enumerate}
	\end{remark}
}
\section{Law of Large Numbers for $X_t$}\label{sec: LLN for displacement}
We start with the law of large numbers for $X_t$. Consider the auxiliary process $\eta_t$ discussed in section \ref{sec: graphical construction and coloring}. \textcolor{black}{In view of \eqref{eq: displacent representation} in Remark \ref{rmk: c_t,xi_t}, we can get a $\mathbb{Q}^{\mu_\rho,d,1}$- martingale $X_t-\int_0^t f(\eta_s)\,ds$, where $f(\eta)$ is a local function of $\eta$. This martingale has a quadratic variation in time $t$. Once we obtain a law of large numbers for the additive functional $\int_0^t f(\eta_s)\,ds$, we get a law of large numbers for $X_t$. It turns out that for the auxiliary process $\eta_t$, we can get an ergodic measure $\nu_e$ and compute its marginal distribution at site 1 explicitly. This is the main subject of this section. We also identify $\eta_t$ with $c_t$, see \eqref{eq: initial condition for Ld,1}.} 
	
	\textcolor{black}{We should notice that the computation in this section also helps the proof of the large deviation principles of $X_t$, see Lemma \ref{lm:Local LDP at positive good points}. To emphasis the connection between these two sections, we will use a five-tuple $\lambda$ with components $\lambda_{p,D} = q_1$, $\lambda_{w,D} = q_1(1-\rho)$, $\lambda_{w,b}=q_1\rho$, $\lambda_{b,p}=\lambda_{w,p}=p_2$ in the computation involving the generator $\tilde{L}_{d,1}$. Similar estimates hold for every $\lambda$ with positive components.}  

\textcolor{black}{We first obtain candidates for the ergodic measure $\nu_e$ by tightness.} The state space for $\eta_t$ ($c_t$) is ${\tilde{\mathbb{X}} =\{b,p,w\}^{\mathbb{Z}_+}}$, and it is compact with the product topology. By Prokhorov's Theorem, any subset of the space of probability measure $\mathbf{M}_1(\tilde{\mathbb{X}})$ with the weak topology is precompact. By Theorem B7 \cite{Li}, any weak limit $\bar{\nu}$ of the mean of empirical measures $\nu_{t_n}$ is invariant with respect to $\tilde{L}_{d,1}$. Here, $\nu_{t}$ is defined by its action on local functions on $\tilde{\mathbb{X}}$, \[\langle \nu_t,f\rangle:=\frac{1}{t}\mathbb{E}^{\mu_\rho,d,1}[\int_0^t f(\eta_s)\,ds].\]

\textcolor{black}{Next, we get estimates for these weak limits $\bar{\nu}$ from some test functions. There are three estimates, (see Lemmas \ref{lm: priori estimates}, \ref{lm: explicit infor on site 1}, and \ref{lm: finite N_b, N_p}). The first estimate is a priori one, which says that the total number of nonwhite cups is finite in expectation when the cup at site one is not black. There is also an ergodic measure $\nu_e$ satisfying this estimate. The second estimate says that, given the first one, there are balance equations for invariant measures $\bar{\nu}$ and $\nu_e$. From the balance equations, we can compute the marginal distribution at site one explicitly. The third says that the total number of nonwhite cups is finite in mean, which will imply $\nu_e$ is the unique ergodic measure satisfying the above estimates.}

We now consider test functions. At any finite time, there are finitely many non-white cups given $\eta_0$ satisfies condition \eqref{eq: initial condition for Ld,1}. Consider a test function of the form $G_{b,W}(\eta):=\sum_{i>0} W(i)\cdot \mathbb{1}_{\{c(i)=b\}}  $, where $W$ is a nonnegative function on $\mathbb{N}$, with $W(0)=0$. $G_{b,W}(\eta_t)$ is finite since $c_t(i) = w$ for $i$ large.  We can compute $\tilde{S}^{ex}_+ G_{b,W}$, $\tilde{L}^{sh} G_{b,W}$, $\tilde{L}^{d}G_{b,W}$ with summation by parts,
\begin{align}\label{eq: S^ex_+ for linear weighted sum}
	\tilde{S}^{ex}_+ G_{b,W} &= \sum_{i>0} W(i)\cdot  \sum_{j>0} p(j-i) \left( \mathbb{1}_{\{c(j)=b\}}-\mathbb{1}_{\{c(i)=b\}}\right) \notag \\
	&= \sum_{i>0} (\Delta_{p,+} W)(i) \cdot \mathbb{1}_{\{c(i)=b\}}
\end{align} where $(\Delta_{p,+} W)(i) = \sum_{y> -i} p(y) \left(W(i+y) -W(i)\right) $.
\begin{align}\label{eq: L^sh_+ for linear weighted sum}
\tilde{L}^{sh} G_{b,W} =&\left(\lambda_{p,D}\mathbb{1}_{\{c(1)=p\}}+ \lambda_{w,D}\mathbb{1}_{\{c(1)=w\}} \right)  \sum_{i>0} \left(\mathbb{1}_{\{c(i+1)=b\}}-\mathbb{1}_{\{c(i)=b\}} \right)W(i)
\notag \\
&+ \lambda_{w,b}\mathbb{1}_{\{c(1)=w\}} W(1)
\notag \\
=& \left(\lambda_{p,D}\mathbb{1}_{\{c(1)=p\}}+ \lambda_{w,D}\mathbb{1}_{\{c(1)=w\}} \right)  \sum_{i>1} (\nabla_{-1} W)(i)\cdot \mathbb{1}_{\{c(i)=b\}}
\notag \\
&+ \lambda_{w,b}\mathbb{1}_{\{c(1)=w\}} W(1),
\end{align} 
where $(\nabla_{-1} W)(i) = W(i-1)-W(i) $.
\begin{align}\label{eq: L^d for linear weighted sum}
\tilde{L}^{d} G_{b,W} = -\lambda_{b,p}\mathbb{1}_{\{c(1)=b\}} W(1).
\end{align}
Combining \eqref{eq: S^ex_+ for linear weighted sum},\eqref{eq: L^sh_+ for linear weighted sum}, and \eqref{eq: L^d for linear weighted sum}, we have $\mathit{L}_{d,1}G_{b,W}$ as:
\begin{align} \label{eq: L_d,1 acts on G_b,w}
	\tilde{L}_{d,1}G_{b,W} =& \sum_{i>0} (\Delta_{p,+} W)(i) \cdot \mathbb{1}_{\{c(i)=b\}} \notag \\
	&+ \left( \lambda_{p,D}\mathbb{1}_{\{c(1)=p\}} +\lambda_{w,D} \mathbb{1}_{\{c(1)=w\}} \right) \cdot\sum_{i>1} (\nabla_{-1} W)(i)\cdot \mathbb{1}_{\{c(i)=b\}}
	\notag \\ 
	&+ \lambda_{w,b}\mathbb{1}_{\{c(1)=w\}}W(1)  -\lambda_{b,p}\mathbb{1}_{\{c(1)=b\}} W(1).
\end{align}
Similarly, for $G_{p,W}:= \sum_{i>0} W(i)\mathbb{1}_{\{c(1)=p\}}$, we have $\tilde{L}_{d,1}G_{p,W}$ as:
\begin{align} \label{eq: L_d,1 acts on G_p,w}
\tilde{L}_{d,1}G_{p,W} =& \sum_{i>0} (\Delta_{p,+} W)(i) \cdot \mathbb{1}_{\{c(i)=p\}} \notag \\
+& \left( \lambda_{p,D}\mathbb{1}_{\{c(1)=p\}} + \lambda_{w,D} \mathbb{1}_{\{c(1)=w\}} \right) \cdot\sum_{i>1} (\nabla_{-1} W)(i)\cdot \mathbb{1}_{\{c(i)=b\}}
\notag \\ 
+& \left(\lambda_{w,p}\mathbb{1}_{\{c(1)= w\}} + \lambda_{b,p}\mathbb{1}_{\{c(1)= b\}}\right) W(1)  -\lambda_{p,D}\mathbb{1}_{\{c(1)=p\}} W(1).
\end{align}
Equations \eqref{eq: L_d,1 acts on G_b,w} and \eqref{eq: L_d,1 acts on G_p,w} will be applied with some choices of $W(i)$. Also, for $j = b \text{ or }p$, we denote by $\langle \nu, G_{j,w}\rangle$ the limit
 \begin{equation} \label{eq: right sense nonlocal function}
 	\langle \nu, G_{j,W}\rangle := \sup_k \langle \nu, G_{j,W,k}\rangle = \lim_{k\to\infty} \langle \nu, \sum_{i=1}^k W(i)\cdot \mathbb{1}_{\{c(i)=j\}}\rangle.
 \end{equation}

\textcolor{black}{Formally, we want to have $(\Delta_{p,+} W)(i)=0$ for $i$ large enough.} Consider the first four quantities $N_b$, $N_p$, $W_b$, $W_p$, when we choose $W(i) = 1 \text{ or } i$, for $i>0$,
\begin{align}
	\label{eq: N_b,N_p}	
	N_b =& \sum_{i>0} \mathbb{1}_{\{c(i)=b\}}, \quad 	N_p = \sum_{i>0} \mathbb{1}_{\{c(i)=p\}},
	\\
	\label{eq: W_b,W_p}
	W_b =& \sum_{i>0} i\cdot \mathbb{1}_{\{c(i)=b\}}, \quad 
	W_p = \sum_{i>0} i\cdot \mathbb{1}_{\{c(i)=p\}}.
\end{align} Particularly, the first two are the total numbers of blue cups and purple cups. 

The first lemma says that $\mathbb{1}_{\{c(1)=p\}}\cdot N_b$ ,$\mathbb{1}_{\{c(1)=w\}}\cdot N_b$, $\mathbb{1}_{\{c(1)=p\}}\cdot N_p$ and $\mathbb{1}_{\{c(1)=w\}}\cdot N_p$ are all uniformly bounded in expectation with respect to $\nu_t$ and any weak limit $\bar{\nu}$:
\begin{lemma} \label{lm: priori estimates}
	Consider the auxiliary process $\eta_t$ with initial condition \eqref{eq: initial condition for Ld,1}. Let $\nu_t$ be the mean of empirical measures at time $t$, and $\bar{\nu}$ be a weak limit of any subsequence $\nu_{t_n}$ (if it exists). Then, there is a positive constant $C>0$ such that for all $t \geq 0$, $j = b$ or $p$, $\nu = \nu_t$ or $\bar{\nu}$,
	\begin{equation}\label{eq: priori estimate}
		\langle \nu, \mathbb{1}_{\{c(1)=p\}}\cdot N_j \rangle,\quad \langle \nu, \mathbb{1}_{\{c(1)=w\}}\cdot N_j \rangle \leq C.
	\end{equation}
	We understand the above notions in the sense of \eqref{eq: right sense nonlocal function}.
\end{lemma}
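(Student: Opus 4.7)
The plan is to construct Lyapunov-type quantities by substituting a truncated linear weight into \eqref{eq: L_d,1 acts on G_b,w} and \eqref{eq: L_d,1 acts on G_p,w}, and then closing the inequality with Dynkin's formula together with the non-negativity of the resulting functionals. I will describe the estimate for $N_b$ in detail; the estimate for $N_p$ is obtained in exactly the same way from \eqref{eq: L_d,1 acts on G_p,w}, after correcting the evident typo there so that the second sum involves $\mathbb{1}_{\{c(i)=p\}}$ rather than $\mathbb{1}_{\{c(i)=b\}}$.

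Set $w_k(0)=0$ and $w_k(i)=\min(i,k)$ for $i\ge 1$, and write $W_b^k:=G_{b,w_k}$ and $N_b^k:=\sum_{i=1}^k \mathbb{1}_{\{c(i)=b\}}$. Assumption A1 is essential here: a direct case analysis shows that $(\Delta_{p,+}w_k)(i)$ vanishes for $3\le i\le k-2$ and for $i\ge k+2$, and is bounded by a constant depending only on $p_1,p_2$ at the boundary indices $i\in\{1,2,k-1,k,k+1\}$; moreover $(\nabla_{-1}w_k)(i)=-1$ for $1\le i\le k$ and $0$ for $i\ge k+1$. Substituting into \eqref{eq: L_d,1 acts on G_b,w} and using that $\mathbb{1}_{\{c(1)=b\}}$ annihilates the indicators at $\{c(1)=p\}$ and $\{c(1)=w\}$ gives
\begin{equation}
\tilde{L}_{d,1} W_b^k \;=\; B_k(\eta) \;-\; q_1 \mathbb{1}_{\{c(1)=p\}} N_b^k \;-\; (1-\rho)\, q_1 \mathbb{1}_{\{c(1)=w\}} N_b^k,
\end{equation}
where $B_k$ is a local function with $\|B_k\|_\infty \le C$ uniformly in $k$. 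Since $W_b^k$ is a bounded local function, Dynkin's formula applies; the initial condition \eqref{eq: initial condition for Ld,1} gives $W_b^k(\eta_0)=0$, and using $W_b^k(\eta_t)\ge 0$ together with $\|B_k\|_\infty\le C$, dividing by $t$ yields
\begin{equation}
q_1 \langle \nu_t, \mathbb{1}_{\{c(1)=p\}} N_b^k\rangle \;+\; (1-\rho)\, q_1 \langle \nu_t, \mathbb{1}_{\{c(1)=w\}} N_b^k\rangle \;\le\; C
\end{equation}
uniformly in $t>0$ and $k\ge 3$.

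To obtain the stated bound, let $k\to\infty$: since $N_b^k\uparrow N_b$ pointwise, monotone convergence combined with the definition \eqref{eq: right sense nonlocal function} yields the estimate for $\nu_t$. To pass to a weak limit $\bar\nu$ of $\nu_{t_n}$, observe that for each fixed $k$ the functional $\mathbb{1}_{\{c(1)=\ast\}} N_b^k$ is bounded and continuous on $\tilde{\mathbb{X}}$ in the product topology, hence the estimate survives the weak limit; a second monotone limit in $k$ completes the transfer. The analogous computation with $W_p^k:=G_{p,w_k}$ in \eqref{eq: L_d,1 acts on G_p,w} produces the corresponding bounds for $N_p$. The only subtle point in the whole argument is the verification that the truncation does not create unbounded boundary contributions from $\tilde{S}^{ex}_{+,1}$; this is precisely where A1 enters, since range-two jumps confine $\Delta_{p,+}w_k$ to a window of fixed width near the cutoff, so that $B_k$ remains uniformly bounded. (For a kernel of unbounded symmetric range the same weight would produce contributions of order $k$ from cups at $k+1,k+2,\dots$, and a more carefully shaped bounded concave weight would be required.)
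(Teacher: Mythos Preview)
Your proof is correct and follows essentially the same approach as the paper: both use the linear weight $w(i)=i$ in \eqref{eq: L_d,1 acts on G_b,w} so that the $\nabla_{-1}$ term produces $-N_b$ while the $\Delta_{p,+}$ term leaves only a bounded boundary contribution, then apply Dynkin/It\^o and the non-negativity of $W_b$. The only difference is that the paper works directly with the untruncated $W_b=\sum_{i>0} i\,\mathbb{1}_{\{c(i)=b\}}$, relying on the a.s.\ finiteness of the number of non-white cups, whereas you truncate to $w_k(i)=\min(i,k)$ and pass to the limit; your version is slightly more careful but not a genuinely different argument.
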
  
\begin{proof}:
	We will show the case when $j=b$ since the other case follows similar arguments. Consider $W_b(t) = G_{b,W}(\eta_t) =\sum_{i>0} i\cdot \mathbb{1}_{\{c_t(i)=b\}}$, which is finite at any time $t\geq 0$. Applying Ito's formula, we have a $\mathbb{Q}^{\mu_\rho,d,1}$- martingale $W_b(t) - \int_{0}^{t} \tilde{L}_{d,1} W_b(s) \,ds$. By equation  \eqref{eq: L_d,1 acts on G_b,w},
	\begin{align}
		\tilde{L}_{d,1} W_b =& - \left( \lambda_{p,D}\mathbb{1}_{\{c(1)=p\}} + \lambda_{w,D} \mathbb{1}_{\{c(1)=w\}} \right)\cdot N_b
		\notag \\ &+ \sum_{k=1,2} a_k \mathbb{1}_{\{c(k)=w\}} + b_k\mathbb{1}_{\{c(k)=b\}}, 
	\end{align} 
	where $a_k, b_k$ \textcolor{black}{are constants depending on $\lambda$.} Taking expectation with respect to $\mathbb{Q}^{\mu_\rho,d,1}$, we have
	\begin{align}
		&\mathbb{E}^{\mu_\rho,d,1}\left[\int_{0}^{t}\left( \lambda_{p,D}\mathbb{1}_{\{c(1)=p\}} +\lambda_{w,D} \mathbb{1}_{\{c(1)=w\}} \right)\cdot N_b(s) \,ds\right]  \notag \\
		=& \mathbb{E}^{\mu_\rho,d,1}\left[\int_{0}^{t} \sum_{k=1,2} a_k \mathbb{1}_{\{c_s(k)=w\}} + b_k\mathbb{1}_{\{c_s(k)=b\}}\,ds \right] -\mathbb{E}^{\mu_\rho,d,1}\left[W_b(t)\right]. 
	\end{align}
	Since $W_b\ge 0$, dividing $t$ on both sides, we see
	\begin{equation}\label{eq: important estimate}
		 \langle \nu_t,  \left( \lambda_{p,D}\mathbb{1}_{\{c(1)=p\}} +\lambda_{w,D} \mathbb{1}_{\{c(1)=w\}} \right)\cdot N_b  \rangle \leq \sum_{k=1}^2 \abs{a_k} +\abs{b_k},
	\end{equation}
	which is sufficient for \eqref{eq: priori estimate}, \textcolor{black}{as $\lambda_{p,D}$ and $\lambda_{w,D}$ are strictly positive}.
\end{proof}	

	Consider $\tilde{L}_{d,1} N_b$ and $\tilde{L}_{d,1} N_p$. By equations \eqref{eq: L_d,1 acts on G_b,w} and \eqref{eq: L_d,1 acts on G_p,w}, we have
	\begin{align*}
	\tilde{L}_{d,1} N_b =& \lambda_{w,b}\mathbb{1}_{\{c(1)=w\}}  -\lambda_{b,p}\mathbb{1}_{\{c(1)=b\}}   \\
	\tilde{L}_{d,1} N_p =& \lambda_{w,p}\mathbb{1}_{\{c(1)=w\}}+\lambda_{b,p}\mathbb{1}_{\{c(1)= b\}}  -\lambda_{p,D}\mathbb{1}_{\{c(1)=p\}}.
	\end{align*}
	Taking expectation with respect to some invariant measure $\bar{\nu}$, together with $\bar{\nu}_b+\bar{\nu}_p+\bar{\nu}_c=1$, we expect (formal) balance equations of $\bar{\nu}_b=\langle \bar{\nu},\mathbb{1}_{\{c(1)=b\}}\rangle$, $\bar{\nu}_p=\langle \bar{\nu},\mathbb{1}_{\{c(1)=p\}}\rangle$,$\bar{\nu}_w=\langle \bar{\nu},\mathbb{1}_{\{c(1)=w\}}\rangle$:
	\begin{equation} \label{eq: marginal equations}
	\begin{cases}
	 \lambda_{w,b}\bar{\nu}_w  -\lambda_{b,p}\bar{\nu}_b   &= 0\\
	 \lambda_{w,p}\bar{\nu}_w + \lambda_{b,p}\bar{\nu}_b  -\lambda_{p,D}\bar{\nu}_p &= 0
	\\
	\bar{\nu}_w +\bar{\nu}_b+\bar{\nu}_p &= 1
	\end{cases}
	.\end{equation}
	This is a formal computation because neither $N_b$ nor $N_p$ is a local function. We can approximate $N_b$ and $N_p$ by functions $N_{b,r}=G_{b,W}$ and  $N_{p,r}=G_{p,W}$, where $W$ has geometric weights $W(i)=r^i$, for some $r<1$. These are almost local functions, and we can get that $\langle\bar{\nu},\tilde{L}_{d,1}N_{b,r}\rangle=0$. The estimate \eqref{eq: priori estimate} allows us to take limit as $r$ goes to $1$ and derive \eqref{eq: marginal equations}. We have the second lemma.
	
	\begin{lemma}\label{lm: explicit infor on site 1}
		Consider the auxiliary process $\eta_t$ with the generator $\tilde{L}_{d,1}$, and let $\bar{\nu}$ be some invariant measure with respect to $\tilde{L}_{d,1}$. If there is a constant $C>0$, such that $\bar{\nu}$ satisfies \eqref{eq: priori estimate}, we have the marginal distribution of site $1$ solves equations 
		\eqref{eq: marginal equations}. Particularly,	\begin{align}
		\label{eq: marginal density for d_b}
		\bar{\nu}_b
		=\frac{\rho q^2_1}{\left(q_1 + p_2\right) \left(p_2 + \rho q_1 \right)}, 
		\bar{\nu}_p=
		\frac{p_2}{q_1+p_2},
		\bar{\nu}_w=
		\frac{p_2 q_1}{\left(q_1+p_2 \right) \left(p_2 + \rho q_1 \right)}. 
		\end{align}
		
	
	\end{lemma}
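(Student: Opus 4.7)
The plan is to apply the invariance identity $\langle\bar\nu,\tilde L_{d,1}f\rangle=0$ to $f=N_b$ and $f=N_p$, read off the two linear equations in \eqref{eq: marginal equations}, combine with the normalization $\bar\nu_w+\bar\nu_b+\bar\nu_p=1$, and then solve the resulting linear system by elementary algebra.

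Since $N_b,N_p$ are not local, I first approximate by the local truncations $N_{b,k}:=\sum_{i=1}^{k}\mathbb{1}_{\{c(i)=b\}}$ and $N_{p,k}:=\sum_{i=1}^{k}\mathbb{1}_{\{c(i)=p\}}$. A direct computation using the three pieces of $\tilde L_{d,1}$ gives
\[
\tilde L_{d,1}N_{b,k}=\rho q_1\mathbb{1}_{\{c(1)=w\}}-p_2\mathbb{1}_{\{c(1)=b\}}+R_k^{(b)},
\]
where $R_k^{(b)}$ is a finite linear combination (with coefficients depending only on $p_1,p_2,q_1,\rho$) of boundary indicators $\mathbb{1}_{\{c(j)=b\}}$ with $j\in\{k-1,k,k+1,k+2\}$ produced by $\tilde S^{ex}_{+,1}$ exchanging across the truncation window, together with cross-terms $\mathbb{1}_{\{c(1)\in\{w,p\}\}}\mathbb{1}_{\{c(k+1)=b\}}$ produced by the shift in $\tilde L^{sh}$ bringing site $k+1$ into the window; the death term $\tilde L^dN_{b,k}=-p_2\mathbb{1}_{\{c(1)=b\}}$ contributes no boundary. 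An entirely analogous formula holds for $N_{p,k}$, with bulk $p_2\mathbb{1}_{\{c(1)\ne p\}}-q_1\mathbb{1}_{\{c(1)=p\}}$ and a remainder $R_k^{(p)}$ of the same shape.

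The main obstacle is to show that $\langle\bar\nu,R_k^{(b)}\rangle$ and $\langle\bar\nu,R_k^{(p)}\rangle$ vanish in an appropriate limit. The cross-terms are controlled by Lemma \ref{lm: priori estimates}: that lemma and monotone convergence imply $\sum_{k}\bar\nu(\{c(1)\in\{w,p\},c(k)=b\})<\infty$ and the analogous bound with $b$ replaced by $p$, so the summands tend to zero as $k\to\infty$. The pure stirring boundary contributions take the form of telescoping differences $\bar\nu(\{c(k+j)=b\})-\bar\nu(\{c(k+j')=b\})$ with $|j-j'|\le 3$; Cesaro-averaging over $k=1,\dots,K$ collapses each telescope to $O(1/K)$ by the uniform bound $\bar\nu(\{c(i)=b\})\le 1$, and the same argument handles $R_k^{(p)}$. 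Averaging the identities $\langle\bar\nu,\tilde L_{d,1}N_{b,k}\rangle=0$ and $\langle\bar\nu,\tilde L_{d,1}N_{p,k}\rangle=0$ over $k$ and letting $K\to\infty$ therefore yields the two bulk equations of \eqref{eq: marginal equations}.

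Combined with the trivial normalization $\bar\nu_w+\bar\nu_b+\bar\nu_p=1$, these constitute a $3\times 3$ linear system. Substituting $\bar\nu_b=(\rho q_1/p_2)\bar\nu_w$ from the first equation into the second gives $\bar\nu_p=(p_2+\rho q_1)\bar\nu_w/q_1$; the normalization then determines $\bar\nu_w$, and back-substitution produces the claimed closed-form expressions for $\bar\nu_b,\bar\nu_p,\bar\nu_w$.
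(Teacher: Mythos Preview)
Your argument is correct and reaches the same conclusion as the paper, but the regularization is different. The paper tests the invariance relation against the geometrically weighted sums $N_{b,r}=\sum_{i>0}r^{i}\mathbb{1}_{\{c(i)=b\}}$ (and the analogue with $p$), which are globally bounded by $(1-r)^{-1}$; with this smooth cutoff the stirring contribution is $(1-r)^{2}g_p(r)\langle\bar\nu,N_{b,r}\rangle=O(1-r)$ and the shift cross-term carries an explicit prefactor $(1-r)/r$ times a quantity dominated by $\langle\bar\nu,\mathbb{1}_{\{c(1)\in\{p,w\}\}}N_b\rangle$, so both vanish as $r\uparrow 1$ directly by the a priori bound \eqref{eq: priori estimate}. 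Your route instead uses the sharp truncations $N_{b,k}$, which has the advantage that they are genuinely local so $\langle\bar\nu,\tilde L_{d,1}N_{b,k}\rangle=0$ is immediate; the price is that the stirring boundary $\langle\bar\nu,R_k^{(b)}\rangle$ need not tend to $0$ pointwise in $k$ (you do not yet know $\bar\nu(c(k)=b)\to 0$, since Lemma~\ref{lm: finite N_b, N_p} is proved only afterwards), and you correctly handle this by Ces\`aro averaging the telescoping differences. In short: the paper's exponential weight kills the stirring boundary analytically, your hard cutoff kills it combinatorially via averaging; both rely on \eqref{eq: priori estimate} in exactly the same way for the shift cross-term, and the final linear algebra is identical.
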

	\begin{proof}: We will show the first equation of \eqref{eq: marginal equations}, and the second follows a similar argument.
		
	Consider $W(i) = r^i$,for $ i>0$. Let $N_{b,r} = G_{b,W} = \sum_{i>0} r^i \mathbb{1}_{\{c(i)=b\}}$, which is bounded by $(1-r)^{-1}$ for some $r \in (0,1)$. $\lim_{r\to 1}N_{b,r} = N_b$. \textcolor{black}{By the invariance of $\bar{\nu}$, $\langle \bar{\nu},\tilde{L}_{d,1} N_{b,r}\rangle =0$. For this equality, one can approximate $N_{b,r}$ by local functions $N_{b,r,k}= \sum_{0<i<k} r^i \mathbb{1}_{\{c(i)=b\}}$, and use the invariance of $\bar{\nu}$ to get $\langle \bar{\nu},\tilde{L}_{d,1} N_{b,r,k}\rangle =0$. Taking limit as $k$ goes to infinity, we will get the equality. On the other hand, when we want to take limit as $r$ goes to $1$, we need to take care of the singularity at 1.}
	
	Notice that for $i\geq 3$, 
	\begin{equation*}
		(\Delta_{p,+} W)(i) =  (1-r)^2\cdot g_p(r)W(i),
	\end{equation*} where $g_p(r)$ is a rational function of $r$ involving $p(\cdot)$ with one singularity at $0$, and 
	\[(\nabla_{-1} W)(i) = W(i) (1-r)r^{-1}, \text{ for } i\geq 2.\]
	
	By equation \eqref{eq: L_d,1 acts on G_b,w},
	\begin{align*} 
	\tilde{L}_{d,1}N_{b,r} =& \sum_{i>0} (\Delta_{p,+} W)(i) \cdot \mathbb{1}_{\{c(i)=b\}} 
	\notag \\ 
	&+ \left( \lambda_{p,D}\mathbb{1}_{\{c(1)=p\}} +\lambda_{w,D} \mathbb{1}_{\{c(1)=w\}} \right) \cdot\sum_{i>1} (\nabla_{-1} W)(i)\cdot \mathbb{1}_{\{c(i)=b\}}
	\notag \\ 
	&+ \lambda_{w,b}\mathbb{1}_{\{c(1)=w\}} W(1)  -\lambda_{b,p}\mathbb{1}_{\{c(1)=b\}} W(1) 
	\\ 
	=& (1-r)^2\cdot g_p(r) N_{b,r} -(1-r)\sum_{i=1}^2 h_{i,p}(r) \mathbb{1}_{\{c(i)=b\}}
	\\ 
	&+(1-r)r^{-1}\left( \lambda_{p,D}\mathbb{1}_{\{c(1)=p\}} +\lambda_{w,D} \mathbb{1}_{\{c(1)=w\}} \right)\cdot \left(N_{b,r} - r\cdot \mathbb{1}_{\{c(1)=b\}}\right)
	\\ 
	&+ r\left(\lambda_{w,b}\mathbb{1}_{\{c(1)=w\}}   -\lambda_{b,p} \mathbb{1}_{\{c(1)=b\}}\right), 
	\end{align*} where $h_{i,p}(r)$ is also a rational function on $r$ with a singularity at $0$, \textcolor{black}{it also involves components of $\lambda$}.  
	Taking expectation with respect to $\bar{\nu}$, and take limit as $r \uparrow 1$, we see that only the last line remains. Since $N_{b,r}$ is uniformly bounded by $(1-r)^{-1}$ and $N_{b,r} \leq N_b$, with estimates \eqref{eq: priori estimate}, we can apply Dominated Convergence Theorem to get rid of the first two lines. That is, 
	\begin{equation}
		\lim_{r\uparrow1 } \langle \bar{\nu},\tilde{L}_{d,1} N_{b,r}\rangle = \lambda_{w,b}\bar{\nu}_w   -\lambda_{b,p} \bar{\nu}_b = 0.
	\end{equation}
	Solving equations \eqref{eq: marginal equations}, we get \eqref{eq: marginal density for d_b}.	
	\end{proof}
	
	The third lemma says that if an invariant measure $\bar{\nu}$ with respect to $\tilde{L}_{d,1}$ satisfies estimates \eqref{eq: priori estimate}, $N_b$ and $N_p$ are both in $L_1(\bar{\nu})$. \textcolor{black}{This is a consequence of removals of particles, $\lambda_{b,p}>0$.}
	\begin{lemma}\label{lm: finite N_b, N_p}
		Under the assumptions of Lemma \ref{lm: explicit infor on site 1}, if there is a constant $C>0$, such that $\bar{\nu}$ satisfies \eqref{eq: priori estimate}, we further have, for some constant $C_1$ depending on $C$, $q_1, p(\cdot)$, and $\rho$:
		\begin{equation} \label{eq: estimate for N_b, N_q}
			\langle \bar{\nu},N_b \rangle, \langle \bar{\nu},N_p \rangle < C_1.
		\end{equation}
	\end{lemma}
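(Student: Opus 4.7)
The strategy is to apply It\^o's formula to $N_b^2$ and combine the resulting balance with Lemma \ref{lm: priori estimates} to bound the missing piece $\langle \bar\nu, \mathbb{1}_{\{c(1)=b\}}N_b\rangle$; the full bound then follows from the decomposition $N_b = \mathbb{1}_{\{c(1)\neq b\}}N_b + \mathbb{1}_{\{c(1)=b\}}N_b$.

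The key observation is that $N_b$ changes only via two kinds of events: a $C$-event firing with $c(1)=w$ that reveals a red particle (rate $\rho q_1 \mathbb{1}_{\{c(1)=w\}}$, $\Delta N_b=+1$), and a $D$-event firing with $c(1)=b$ (rate $p_2 \mathbb{1}_{\{c(1)=b\}}$, $\Delta N_b=-1$); exchanges and all other shifts leave $N_b$ unchanged. Consequently,
\begin{equation*}
\tilde{L}_{d,1}N_b^2 = \rho q_1 \mathbb{1}_{\{c(1)=w\}}(2N_b+1) + p_2 \mathbb{1}_{\{c(1)=b\}}(-2N_b+1).
\end{equation*}
Since $N_b^2$ is not globally bounded, we stop at $\tau_M := \inf\{t\ge 0 : N_b(t) > M\}$, which tends to $\infty$ almost surely because only finitely many non-white cups are present at any finite time. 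On $[0, t\wedge\tau_M]$ both $N_b$ and the integrand above are bounded, so by optional stopping the process $N_b(t\wedge\tau_M)^2 - \int_0^{t\wedge\tau_M}\tilde{L}_{d,1}N_b^2(\eta_s)\,ds$ is a genuine zero-mean $\mathbb{Q}^{\mu_\rho,d,1}$-martingale.

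Using $\mathbb{E}[N_b(t\wedge\tau_M)^2]\ge 0$ and letting $M\to\infty$ by monotone convergence yields
\begin{equation*}
2p_2\, \mathbb{E}^{\mu_\rho,d,1}\!\int_0^t \mathbb{1}_{\{c_s(1)=b\}}N_b(s)\,ds \le 2\rho q_1\, \mathbb{E}^{\mu_\rho,d,1}\!\int_0^t \mathbb{1}_{\{c_s(1)=w\}}N_b(s)\,ds + (\rho q_1 + p_2)t.
\end{equation*}
Dividing by $t$ and applying Lemma \ref{lm: priori estimates} produces a bound on $\langle \nu_t, \mathbb{1}_{\{c(1)=b\}}N_b\rangle$ uniform in $t$. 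We pass to a weak limit $\nu_{t_n}\to\bar\nu$ through the bounded continuous local truncations $\mathbb{1}_{\{c(1)=b\}}\sum_{i=1}^k \mathbb{1}_{\{c(i)=b\}}$, then send $k\to\infty$ by monotone convergence, transferring the bound to $\bar\nu$. Combining with the a priori estimate on $\mathbb{1}_{\{c(1)\neq b\}} N_b$ from Lemma \ref{lm: priori estimates} yields $\langle \bar\nu, N_b\rangle<\infty$. The argument for $N_p$ is identical, starting from $\tilde{L}_{d,1} N_p^2 = p_2 \mathbb{1}_{\{c(1)\neq p\}}(2N_p+1) + q_1 \mathbb{1}_{\{c(1)=p\}}(-2N_p+1)$ and exchanging the roles of the creation/destruction events. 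The chief subtlety is the truncation step required to handle the unbounded functional $N_b^2$; it rests entirely on the fact that $N_b(t)$ is finite at every finite $t$, which is in turn guaranteed by the initial condition \eqref{eq: initial condition for Ld,1} together with the finite rate of coloring events.
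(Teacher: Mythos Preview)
Your argument for $N_b$ is correct and is in fact cleaner than the paper's. The paper works directly under the invariant measure: it multiplies a bounded truncation $N_{b,r,L}=\sum_{i>L}r^i\mathbb{1}_{\{c(i)=b\}}$ by the indicator $\mathbb{1}_{\{c(1)=b\}}$, uses the product rule for $\tilde S^{ex}_++\tilde L^d$ (the two factors are supported far apart), and extracts a coercive term $p_2\langle\bar\nu,N_{b,r,L}\rangle$ from the exchange dynamics acting on $\mathbb{1}_{\{c(1)=b\}}$. You instead exploit the fact that $N_b$ itself is unchanged by exchanges and shifts, so the carr\'e du champ of $N_b$ has only two terms; this is a genuinely different and more elementary route.

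However, your claim that ``the argument for $N_p$ is identical'' does not close. From your formula $\tilde L_{d,1}N_p^2=p_2\mathbb{1}_{\{c(1)\neq p\}}(2N_p+1)+q_1\mathbb{1}_{\{c(1)=p\}}(-2N_p+1)$ the same manipulations give
\[
2q_1\langle\nu_t,\mathbb{1}_{\{c(1)=p\}}N_p\rangle\ \le\ 2p_2\langle\nu_t,\mathbb{1}_{\{c(1)\neq p\}}N_p\rangle+(p_2+q_1).
\]
The left side is already controlled by Lemma~\ref{lm: priori estimates}, while the right side contains $\langle\nu_t,\mathbb{1}_{\{c(1)=b\}}N_p\rangle$, which is precisely the piece you still need. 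The inequality points the wrong way. The asymmetry with the $N_b$ case is that blue cups are created only when $c(1)=w$ (a case covered by Lemma~\ref{lm: priori estimates}), whereas purple cups are created also when $c(1)=b$ (the one case not covered).

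A quick repair within your scheme: once $\langle\nu_t,\mathbb{1}_{\{c(1)=b\}}N_b\rangle$ is bounded, run the same martingale argument on $N_bN_p$. Its generator contains the term $p_2\mathbb{1}_{\{c(1)=b\}}(N_b-N_p-1)$, and positivity of $N_bN_p$ then yields
\[
p_2\langle\nu_t,\mathbb{1}_{\{c(1)=b\}}N_p\rangle\ \le\ \rho q_1\langle\nu_t,\mathbb{1}_{\{c(1)=w\}}N_p\rangle+p_2\langle\nu_t,\mathbb{1}_{\{c(1)=b\}}N_b\rangle+p_2\langle\nu_t,\mathbb{1}_{\{c(1)=w\}}N_b\rangle,
\]
all of which are now controlled. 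The paper avoids this two-step structure because in its approach the multiplier $\mathbb{1}_{\{c(1)=b\}}$ is the same for both $N_b$ and $N_p$, so the coercivity mechanism is genuinely identical in the two cases.
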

	\begin{proof}: We only show the first one. It is similar to the proof of Lemma \ref{lm: explicit infor on site 1}.
		
		Let $W(i) = r^i$ for $i >L \geq 5$ and $W(i)=0$ otherwise. Consider $N_{b,r,L}:= G_{b,W}$, \textcolor{black}{which is bounded by  $(1-r)^{-1}$}. We then compute $\tilde{L}_{d,1}\left(\mathbb{1}_{\{c(1)=b\}}N_{b,r,L}\right)$. Since $\tilde{S}^{ex}_+$ and $\tilde{L}^d$ only involve interchanges of sites, and $N_{b,r,L}$ depends on sites far from site 1,  we can apply the product rule:
		\[(\tilde{S}^{ex}_+ + \tilde{L}^d)(\mathbb{1}_{\{c(1)=b\}}\cdot N_{b,r,L}) = (\tilde{S}^{ex}_+ + \tilde{L}^d)\mathbb{1}_{\{c(1)=b\}} \cdot N_{b,r,L} + (\tilde{S}^{ex}_+ + \tilde{L}^d)N_{b,r,L} \cdot  \mathbb{1}_{\{c(1)=b\}}.\] Therefore, applying equation \eqref{eq: S^ex_+ for linear weighted sum} and \eqref{eq: L^d for linear weighted sum}, we have 
		\begin{align}
			&(\tilde{S}^{ex}_+ + \tilde{L}^d)(\mathbb{1}_{\{c(1)=b\}}\cdot N_{b,r,L})
			\notag \\ 
			=&  \left(p_2 \mathbb{1}_{\{c(3)=b\}} + p_1 \mathbb{1}_{\{c(2)=b\}} - \left(p_1+ p_2 + \lambda_{b,p}\right)\mathbb{1}_{\{c(1)=b\}} \right) \cdot N_{b,r,L}
			\notag \\ 
			&+ (1-r)^2 \cdot g_p(r)\mathbb{1}_{\{c(1)=b\}} \cdot N_{b,r,L} 
			\notag \\ 
			&+ T_3 
			\notag \\ 
			=& -\left( p_1+p_2 +\lambda_{b,p} \right)\cdot N_{b,r,L} +\left(p_2 \mathbb{1}_{\{c(3)=b\}} + p_1 \mathbb{1}_{\{c(2)=b\}}\right)\cdot N_{b,r,L}
			\notag \\ 
			&+ (1-r)^2 \cdot g_p(r)\cdot N_{b,r,L}\cdot \mathbb{1}_{\{c(1)=b\}} 
			\notag \\ 
			&+ T_3 + \left( p_1+p_2 +\lambda_{b,p} \right)\mathbb{1}_{\{c(1)\neq b\}}\cdot N_{b,r,L} \notag
		\end{align} where $T_3$ is a bounded boundary term which involves finitely many  $\mathbb{1}_{\{c(i)=b\}}, \mathbb{1}_{\{c(i)=p\}}$ and \textcolor{black}{$\lambda$}. 
		On the other hand, taking expectation with respect to an $\tilde{L}_{d,1}$- invariant measure $\bar{\nu}$, and rearranging terms, we have
		\begin{align} \label{eq: N_b,r equation}
			& \left( p_1+p_2+\lambda_{b,p}\right)\langle \bar{\nu}, N_{b,r,L}\rangle  -\left(p_2 \langle \bar{\nu},\mathbb{1}_{\{c(3)=b\}}\cdot N_{b,r,L}\rangle + p_1 \langle \bar{\nu},\mathbb{1}_{\{c(2)=b\}}\cdot N_{b,r,L}\rangle \right) 
			\notag \\
			=& (1-r)^2 \cdot g_p(r) \langle \bar{\nu}, \mathbb{1}_{\{c(1)=b\}} \cdot N_{b,r,L} \rangle
			+ \left( p_1+p_2+\lambda_{b,p} \right)\langle \bar{\nu},\mathbb{1}_{\{c(1)\neq b\}}\cdot N_{b,r,L} \rangle 
			\notag \\ 
		 &+ \langle  \bar{\nu}, T_3 \rangle + \langle \bar{\nu},\tilde{L}^{sh}\left(\mathbb{1}_{\{c(1)=b\}}\cdot N_{b,r,L}\right) \rangle
		\end{align}
		By estimates \eqref{eq: priori estimate}, the right hand side of equation \eqref{eq: N_b,r equation} is uniformly bounded in $r \in (0,1)$, and the left hand side is bounded below by $\lambda_{b,p} \langle \bar{\nu}, N_{b,r,L} \rangle$. Therefore, taking limit as $r\uparrow 1$, by Monotone Convergence Theorem, we have 
		\begin{equation}
			\lambda_{b,p} \langle \bar{\nu}, N_{b,1,L} \rangle \leq C_2
		\end{equation} which is sufficient for \eqref{eq: estimate for N_b, N_q}.
	\end{proof}
	
	Now we can prove Theorem \ref{Thm: LLN for X_t}. We first see the existence of an ergodic measure $\nu_e$ with respect to $\tilde{L}_{d,1}$, and $\nu_e$ satisfies estimates  \eqref{eq: priori estimate} and \eqref{eq: estimate for N_b, N_q}. As a consequence of the graphical construction and estimate \eqref{eq: estimate for N_b, N_q}, there is a positive probability to get rid of all nonwhite cups within finite time. Then, we obtain a law of large numbers for additive functionals when the initial configuration has only white cups. As a consequence, we get Theorem \ref{Thm: LLN for X_t}.

	\begin{proof}(Theorem \ref{Thm: LLN for X_t}): \textcolor{black}{By Lemma \ref{lm: priori estimates}, there exists a measure $\bar{\nu}$, which is invariant with respect to $\tilde{L}^{d,1}$, and it satisfies \[\langle\bar{\nu}, \mathbb{1}_{\{c(1)\neq b\}} \cdot(N_b+N_p) \rangle \leq C,\]for some $C>0$. Since $\bar{v}$ can be written as convex combinations of ergodic measures with respect to $\tilde{L}^{d,1}$, there is an ergodic measure $\nu_e$ satisying the above estimate (in the sense of \eqref{eq: right sense nonlocal function}). By Lemma \ref{lm: explicit infor on site 1}, \ref{lm: finite N_b, N_p}, we get the marginal distribution of $\nu_e$ at site 1, see \eqref{eq: marginal density for d_b}, and the expectation of $N_b+N_p$ to be finite, see \eqref{eq: estimate for N_b, N_q}. }
		 
		 \textcolor{black}{
		 For the process $\eta_t$ starting from the ergodic measure ${\nu}_e$, we have an ergodic theorem: for any local function on $\tilde{\mathbb{X}} =\{b,p,w\}^{\mathbb{Z}_+}$,
		 \begin{equation}\lim_{t\to\infty} \frac{1}{t}\int_0^t f(\eta_s)\,ds = \langle \nu_e,f \rangle, \quad \quad   \mathbb{Q}^{\nu_e,d,1}-\text{a.s.} \label{eq: ergodic theorem0}
		 \end{equation}} On the other hand, by \eqref{eq: estimate for N_b, N_q}, there is a large $L>0$, such that \[\mathbb{Q}^{\nu_e,d,1}\left(c_0(i)=w, \text{ for all } i > L\right) >0 \] By the graphical construction, for a fixed time $t_0>0$, we can remove all nonwhite cups, and 
		 \begin{equation} \label{eq: positive "renew" probability}
		 \mathbb{Q}^{\nu_e,d,1}\left(c_{t_0}(i)=w, \text{ for all } i >0 \right) >0 
		 \end{equation} Therefore, by the Markov Property of $\mathbb{Q}^{\nu_e,d,1}$, \textcolor{black}{we get a limit theorem for the initial distribution $\mu_\rho$ from \eqref{eq: ergodic theorem0}: for any local function $f$ on $\tilde{\mathbb{X}} =\{b,p,w\}^{\mathbb{Z}_+}$,
%
		 \begin{equation}
		 	\lim_{t\to\infty} \frac{1}{t}\int_0^t f(\xi_s)\,ds = \langle\nu_e,f \rangle, \quad \quad   \mathbb{Q}^{\bar{\mu}_\rho,d,1}-\text{a.s.} \label{eq: ergodic theorem}
		 \end{equation} This also implies the convergence of $\nu_t$, $\lim_{t\to\infty}\nu_t = \nu_e $.}
As ${X_t - \int_{0}^{t}q_1 \mathbb{1}_{\{c_s(1)=p\}} + \rho q_1 \mathbb{1}_{\{c_s(1)=w\}} \,ds}$ is a $\mathbb{Q}^{\mu_\rho,d,1}$-martingale with a quadratic variation of order $t$, we get \eqref{Eq: LLN for X_t} from \eqref{eq: ergodic theorem} and \eqref{eq: marginal density for d_b},
		\begin{equation}\label{eq: LLN for jumps}
		\lim_{t\to\infty} \frac{X_t}{t} = q_1 \bar{\nu}_p + \rho q_1 \bar{\nu}_w = \left(\frac{1}{q_1}+\frac{\rho}{p_2}\right)^{-1},  \quad \mathbb{Q}^{\bar{\mu}_\rho,d,1}-\text{a.s.}
		\end{equation}
	\end{proof}
	
	We end this section by some remarks on the ergodic measure $\nu_e$ for $\eta_t$ and the corresponding invariant measure $\mu$ for the environment process $\xi_t$.
	\begin{remark}\label{rm: ergodic measure}
		\begin{enumerate}
			\item \textcolor{black}{The estimates \eqref{eq: priori estimate} and \eqref{eq: estimate for N_b, N_q} are important for the ergodic measure $\nu_e$. Under these estimates, we can get both existence and uniqueness of the ergodic measure even under perturbation. Since these two estimates will be finite when jump rates $\lambda= \{\lambda_i\}$ are positive, we obtain an ergodic measure for the perturbed color process by similar arguments in the proof of Theorem \ref{Thm: LLN for X_t}. This will be used in the proof of LDP in the section \ref{sec: LDP}, see Lemma \ref{lm:Local LDP at positive good points}.}
			
			\item From estimate \eqref{eq: estimate for N_b, N_q}, the ergodic measure $\nu_e$ is supported on the set of configurations with finitely many non-white cups. This is a countable subset of ${\tilde{\mathbb{X}} =\{b,p,w\}^{\mathbb{Z}_+}}$. From $\nu_e$, we have positive recurrence of $\eta_t$ or $c_t$ and a natural regenerative structure.
%
			\item In view of Remark \ref{rmk: c_t,xi_t}, there is an invariant measure $\mu$ for the environment process $\xi_t$ corresponding to the ergodic measure $\nu_e$. Since $\nu_e$ has finitely many nonwhite cups, $\mu$ is close to a Bernoulli measure $\mu_\rho$: for any finite subset $A\subset \mathbb{N}$,
			\[\lim_{i\to\infty}\langle\mu,\xi_{A+i}\rangle = \lim_{i\to\infty}\langle \nu_e, \prod_{j\in i+A}\mathbb{1}_{\{c(j)=w \}}\rangle = \rho^{\abs{A}}. \]
			In fact, in the (only) special case when $q_1= (1-\rho)p_2$, we can get $\mu =\mu_\rho$ by using \eqref{eq: ergodic theorem} and verifying $\langle\mu_\rho,\mathit{L}_d\xi_A\rangle =0$ for all finite subset $A \subset \mathbb{N}$. For other values of $\rho$, we see $\mu \neq \mu_\rho$ from $m\neq (1-\rho)q_1$.
			
			\item We can deduce that $\mu$ is ergodic for $\xi_t$ by arguments in this section. In deed, we can construct an auxiliary process $\eta_{n,t}$ by coloring cups on the interval $(1,n)$. By analogues of Lemma \ref{lm: priori estimates}, \ref{lm: finite N_b, N_p} and proof of Theorem \ref{thm: LDP for displacemt}, we get an ergodic measure $\nu_{e;n}$ for $\eta_{n,t}$. For a local function $f$ depending on $\{\xi(i): i\leq n\}$, there is a 
			local function $\tilde{f}$ under $\nu_{e;n}$ such that			$f(\xi_t)=\tilde{f}(\eta_{n,t}),$ from which it follows that
			 \[\lim_{t\to\infty}\frac{1}{t}\int_{0}^{t}f(\xi_s)\,ds = \langle\nu_{e;n}, \tilde{f}\rangle=\langle\mu, f\rangle,\quad \mathbb{P}^{\mu_\rho,d} -\text{a.s.} \]
			 In view of the third point in this remark, we have ergodic measures $\mu$ for $\xi_t$ corresponding to different values of $\rho$, and (only) when $q_1 = (1-\rho)p_2$, the Bernoulli measure $\mu_\rho$ is ergodic.
		\end{enumerate} 
	\end{remark}
\section{Regenerative Structure and Functional Central Limit Theorem}\label{sec: Regenerative Structure}
\textcolor{black}{ We want to prove that the system is strongly recurrent. This is achieved by proving that the regeneration time, which is the return time
	to some convenient configuration, has a finite second moment. This is a classical approach, but its employment depends on models and is often different. For applications of regenerative structure in similar models, we refer to \cite{BR,CQR07,CQR09,JMR} and other references mentioned in the introduction.}
	
	 In our model, we will use the second auxiliary process $(\bar{\psi}_t,\bar{m}_t)$ from section \ref{sec: graphical construction and coloring} and estimate regeneration time $\tau$ and $X_\tau$. We observe the followings. The regeneration time $\tau$ is the return time that the system has only white cups and, equivalently, $\bar{m}_t=0$. The boundary process $\bar{m}_t$ increases when new particles are revealed at event times of $(\mathcal{N}_{x,y})_{x>y}$. It increases by 1 at a rate of $p_1+p_2$ and by 2 at a rate of $p_2$. Meanwhile, $\bar{m}_t$ decreases only when $X_t$ increases. In view of the law of large numbers of $X_t$, $\bar{m}_t$ decreases at a rate $m$. We expect a regenerative time with nice moments when $m > w = 3p_2 +p_1$.
	
	\textcolor{black}{
	On the other hand, with the second auxiliary process, we can also interpret the rate $m$ as the inverse of the average occupation time of a cup at site 1 by time $\tau$. A white cup is colored blue or purple according to a Bernoulli random variable when $m_t$ increases. The removal of this cup always occurs at event times of $\mathcal{C}$ when the cup is purple and is at site 1. If this cup is colored blue initially, it will be colored purple at site 1 at event times of $\mathcal{D}$ before its removal. As $\mathcal{C}$, $\mathcal{D}$ are Poisson processes, we expect the occupation time of a purple cup at site 1 by time $\tau$ to be distributed as an exponential random variable with a parameter $q_1$, and the occupation time of a blue cup at site 1 by time $\tau$ (without the time it is colored purple) to be distributed as an exponential random variables with parameters $p_2$. By averaging, we expect $m= (q^{-1}_1 + \rho p_2^{-1})^{-1}$.} 

\textcolor{black}{
We will carry out the computation more rigorously and also derive moment generating functions of $\tau$ and $m_\tau$ explicitly. We will consider four numbers corresponding to changes of colors by time $t$, and derive some exponential martingales. By applying the Optional-Stopping Theorem at time $\tau$ and choosing parameters, we will get the moment generating function of $\tau$ for negative values near $0$. These parameters can be chosen via the occupation time described before. And the condition $w<m$ allows us to extend the moment generating function to positive values analytically. }

\textcolor{black}
{Consider four processes $N_{w,b}$, $N_{w,p}$, $N_{b,p}$ and $N_{p,D}$, which count the numbers of successful color changes at event times of $(\mathcal{N}_{x,y})_{x>y}$, $\mathcal{C}$ and $\mathcal{D}$. These changes can be tracked by the dynamics of $(\bar{\psi}_t,\bar{m}_t)$. Assume $t\leq \tau$, $\bar{m}_t \geq 1$. $N_{w,b}$ counts the number of white cups colored blue and it increases only when $\bar{m}_t$ increases; $N_{w,p}$ counts the number of white cups colored purple and it also increases only when $\bar{m}_t$ increases; $N_{b,p}$ counts the number of blue cups colored purple, and it increases only at event times $t$ of $\mathcal{D}$ with $\tilde{c}_{t-}(1) =b$;  $N_{p,D}$ counts the number of times $\bar{m}_t$ decreases, and it only increases at event times $t$ of $\mathcal{D}$ with $\tilde{c}_{t-}(1) =p$. They are jump processes with varying jump rates depending on $\tilde{c}_t$. By the graphical construction, we can write 
 exponential martingales associated to the jump processes, and see that these processes are almost orthogonal:}
 
 \begin{lemma}\label{lm: orthogonal exponential martingales}
 	Consider the stopped process $(\bar{\psi}_t,\bar{m}_t)$ with initial condition \eqref{eq: renewal process initial condition}. For $a,b,c,d \in \mathbb{R}$, we have exponential martingales $M_t(a,b,c,d)$ as 
 	\begin{align} \label{eq: exponential martingale}
 		M_t(a,b,c,d) = \exp&\left[a \left(N_{w,b}(t) -N_{w,b}(0)\right)  + b\left(N_{w,p}(t) -N_{w,p}(0)\right) \right. 
 		\notag \\ 
 		& +c\left(N_{b,p}(t) -N_{b,p}(0)\right) +d\left(N_{p,D}(t) -N_{p,D}(0)\right) 
 		\notag \\ 
 		&- \int_{0}^{t\wedge\tau}\mathbb{1}_{\{\bar{m}_s\geq 1\}}\cdot (p_2+p_1) \left(\rho\exp(a) + (1-\rho)\exp(b)-1)\right) 
 		\notag\\
 		&+ \mathbb{1}_{\{\bar{m}_s\geq 1\}}\cdot p_2 \left((\rho\exp(a)+(1-\rho)\exp(b))^2-1\right)
 		\notag\\ 
 		&+ \mathbb{1}_{\{c_s(1)=b\}}\cdot p_2 \left(\exp(c)-1)\right)
 		\notag \\ 
 			&+ \left. \mathbb{1}_{\{c_s(1)=p\}}\cdot q_1 \left(\exp(d)-1)\right)\,ds \right]
 	\end{align}
 \end{lemma}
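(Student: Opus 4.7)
The plan is to recognize $M_t(a,b,c,d)$ as the Dolé­ans--Dade exponential associated with the joint marked point process $(N_{w,b},N_{w,p},N_{b,p},N_{p,D})$ stopped at $\tau$, and to verify the martingale property by writing out its predictable compensator and checking it equals the integral inside the exponent.

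First I would catalogue the exact mechanism and conditional rate of each jump of the four counting processes under the dynamics generated by $\tilde L_{d,2}$. For $\bar m_{t-}\geq 1$: events of $N_{x,y}$ with $x\leq \bar m_{t-}<y$ fire at total rate $p_1+p_2$ when $y-x=1$ (a single unrevealed particle on $(\bar m_{t-},\bar m_{t-}+1]$ is revealed, adding $+1$ to $N_{w,b}$ with probability $\rho$ and to $N_{w,p}$ with probability $1-\rho$) and at total rate $p_2$ when $y-x=2$ (two independent Bernoulli trials reveal the particles on $(\bar m_{t-},\bar m_{t-}+2]$); $N_{b,p}$ jumps by $+1$ at rate $p_2\mathbb{1}_{\{c_{t-}(1)=b\}}$ (events of $D$), and $N_{p,D}$ jumps by $+1$ at rate $q_1\mathbb{1}_{\{c_{t-}(1)=p\}}$ (events of $C$). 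For $\bar m_{t-}=0$ the dynamics is frozen. Since all underlying Poisson clocks are independent, their event times are a.s.\ distinct, so the only way two of the four counters can jump at the same instant is the double-crossing event above.

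Next, writing $F(t)=\exp\bigl[a\Delta N_{w,b}+b\Delta N_{w,p}+c\Delta N_{b,p}+d\Delta N_{p,D}\bigr]$ with $\Delta N_\ast(t)=N_\ast(t)-N_\ast(0)$, the Itô formula for pure-jump semimartingales gives
\begin{equation*}
F(t)=F(0)+\sum_{s\leq t}F(s-)\bigl(e^{\Delta(\log F)(s)}-1\bigr),
\end{equation*}
and taking the predictable compensator of the right-hand side term-by-term yields exactly the integrand of $M_t$: the single-crossing events contribute $(p_1+p_2)(\rho e^a+(1-\rho)e^b-1)$, the double-crossing events contribute $p_2\bigl((\rho e^a+(1-\rho)e^b)^2-1\bigr)$ by independence of the two revealings, and the two site-$1$ events contribute $p_2\mathbb{1}_{\{c(1)=b\}}(e^c-1)$ and $q_1\mathbb{1}_{\{c(1)=p\}}(e^d-1)$ respectively. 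Equivalently, a direct computation of $\tilde L_{d,2}\Phi$ applied to $\Phi(\psi,m,N_{w,b},N_{w,p},N_{b,p},N_{p,D})=\exp[aN_{w,b}+\cdots+dN_{p,D}]$ produces precisely these four terms times $\Phi$, so by Dynkin's formula $M_t(a,b,c,d)$ is a local martingale stopped at $\tau$.

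To upgrade local to true martingale I would fix an arbitrary horizon $T$ and bound the exponent of $M_{t\wedge T}$ deterministically: the integral term is bounded by $T\cdot K(a,b,c,d,\rho,p_1,p_2,q_1)$ for an explicit constant $K$, while each counting process up to time $T$ is stochastically dominated by a Poisson process of parameter depending only on $p_1,p_2,q_1$ (for $N_{w,b},N_{w,p},N_{b,p}$) or by $N_{w,p}+1$ for $N_{p,D}$ (since $\bar m_t$ only decreases through $N_{p,D}$ jumps and must stay nonnegative). This supplies uniform integrability on $[0,T]$ and hence the martingale property. The main technical obstacle is getting the double-crossing compensator right: one must carefully observe that when $m_{t-}=1$ the event of $N_{0,2}$ also contributes to a double revealing (as encoded in the last line of \eqref{eq: new  2 S^ex_2}) so that its rate $p_2$ is correctly absorbed into the same $(\rho e^a+(1-\rho)e^b)^2-1$ term rather than double-counted, and that the indicator $\mathbb{1}_{\{\bar m_s\geq 1\}}$ covers both the $m\geq 2$ and $m=1$ regimes uniformly.
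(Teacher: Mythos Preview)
Your approach is essentially the same as the paper's: both produce the exponential martingale by computing $e^{-F}\tilde L_{d,2}e^{F}$ (the paper invokes the formula from \cite{KL}, Appendix~1.7; you frame it as a Dol\'eans--Dade exponential and Dynkin's formula, which amounts to the same calculation). Your explicit local-to-true martingale argument via Poisson domination is a useful addition the paper omits.

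However, your event bookkeeping contains two slips that you should correct. First, classifying boundary-crossing events by the jump size $y-x$ is not the same as classifying by the number of sites revealed: when $y-x=2$ with $x=\bar m_{t-}-1$ and $y=\bar m_{t-}+1$, only the single site $\bar m_{t-}+1$ is revealed. Thus for $\bar m\geq 2$ the reveal-one events are $\{x=\bar m,\ y=\bar m+1\}$ (rate $p_1$) together with $\{x=\bar m-1,\ y=\bar m+1\}$ (rate $p_2$), while the sole reveal-two event is $\{x=\bar m,\ y=\bar m+2\}$ (rate $p_2$); this is what yields the coefficients $p_1+p_2$ and $p_2$ in the compensator. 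Second, the special $N_{0,2}$ term when $\bar m_{t-}=1$ reveals only site~$2$ (note that the sum in the last line of \eqref{eq: new  2 S^ex_2} is over $T_{2,2}=\{b,p\}^{\{2\}}$, a single coordinate), so it feeds the single-reveal term $(p_1+p_2)(\rho e^{a}+(1-\rho)e^{b}-1)$, not the squared term as you assert. Its role is precisely to replace the missing $\{x=\bar m-1,\ y=\bar m+1\}$ crossing at $\bar m=1$, so that the reveal-one rate remains $p_1+p_2$ uniformly in $\bar m\geq 1$. With these two corrections your compensator computation is correct and agrees with the paper's.
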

 \begin{proof}: These martingales are similar to the exponential martingale in the first point of Remark \ref{rmk: c_t,xi_t}. They correspond to exponential martingales obtained from counting processes with varying jump rates depending on $\bar{\psi}_t$ and $\bar{m}_t$. These martingales should be compared with exponential martingales obtained from the formula, see Appendix 1.7 \cite{KL},
	 \begin{align*}
	 \mathbb{M}^{F}_t= \exp \left\{F(\bar{\psi}_t,\bar{m}_t)-F(\bar{\psi}_0,\bar{m}_0) 
	 - \int_{0}^{t}\,ds\, e^{-F(\bar{\psi}_s,\bar{m}_s)}\tilde{L}_{d,2}e^{F(\bar{\psi}_s,\bar{m}_s)} \right\}.
	 \end{align*} 
	 \end{proof}

 One way to choose $a,b,c,d$ is to ensure equations 
 \begin{equation} \label{eq: coupling a'b'c'd'}
 	B = -p_2 (\exp(c)-1))= - q_1(\exp(d)-1)), \quad 
 	b = -d, \quad 
 	c = -a -d, 
 \end{equation}
 under which the time integral in \eqref{eq: exponential martingale} is linear in $t\wedge\tau$, and
 \begin{align} \label{eq: simplified exponential martingale}
 M_t(a,b,c,d) =& \exp\left[ a \left(N_{w,b}(t)-N_{b,p}(t)\right)  + b\left(N_{b,p}(t) +N_{w,p}(t)-N_{p,D}(t)\right) \right. 
 \notag \\ 
 & -a N_{w,b}(0) -b N_{w,p}(0) \notag \\ 
 &+ \left. \int_{0}^{t\wedge\tau}-(p_1+p_2)\left(\rho\exp(a)+(1-\rho)\exp(b)-1\right)\right. 
 \notag\\&\quad 
  \left.-p_2\left(\left(\rho\exp(a)+(1-\rho)\exp(b)\right)^2-1\right) + B\,ds \right].
 \end{align} 
 Let $T_p$ be an exponential random variable with a parameter $q_1$, $T_b$ be the sum of two independent exponential random variables with parameters $q_1$ and $p_2$. We denote by $M_{T_b}(B)$ and $M_{T_p}(B)$ their moment generating functions \begin{align}\label{eq:moment generating function}
 	M_{T_b}(B)=&\frac{p_2}{p_2-B}\frac{q_1}{q_1-B}\\
 	M_{T_p}(B)=&\frac{q_1}{q_1-B}.
 \end{align}
 The second lemma says that we can choose $a, b, c, d$ in terms of the moment generating functions of $T_b$ and $T_p$, such that \eqref{eq: coupling a'b'c'd'} holds for $B<q_1 \wedge p_2$  in a  neighborhood of 0, given $w=p_1+3p_2 < m$:
 \begin{lemma} \label{lm: moment generating functions for tau}
 	Consider the stopped process $(\bar{\psi}_t,\bar{m}_t)$ with initial condition \eqref{eq: renewal process initial condition} and $w=p_1+3p_2 < m$. There exists an $\epsilon>0$, such that for $B \in (-\epsilon,\epsilon)$, we can uniquely solve equations \eqref{eq: coupling a'b'c'd'}. We further have the moment generating function for $\tau$ for $B \in (-\epsilon,\epsilon)$.
 	\begin{equation} \label{eq: mgf for tau}
 		\mathbb{E}^{\mu_\rho,d,2}\left[\exp\left(g(B)\tau\right) \right] = \rho M_{T_b}(B) + (1-\rho) M_{T_p}(B)
 	\end{equation}
 	where
 	\begin{align}\label{eq: g(b)}
 		g(B) = B -&(p_1+p_2) \left(\rho M_{T_b}(B) + (1-\rho)M_{T_b}(B) -1)\right) \notag\\
 		 - & p_2\left(\left(\rho M_{T_b}(B) + (1-\rho)M_{T_b}(B)\right)^2 -1\right) .
 	\end{align}
 \end{lemma}
 \begin{proof}:
 	Solving \eqref{eq: coupling a'b'c'd'} explicitly, we have for $B<p_2$,
 	\[a= \ln M_{T_b}(B), \quad b = \ln M_{T_p}(B),\quad c =\ln\left(1-\frac{B}{p_2}\right),\quad  d'=-\ln M_{T_p}(B).\] 
 	Therefore, we have exponential martingales from \eqref{eq: simplified exponential martingale}
 	\begin{align}\label{eq: uniform exponential martingale}
 		M_t(B) =& \exp\left[ \ln M_{T_b}(B) \left(N_{w,b}(t)-N_{b,p}(t)\right)  +\ln M_{T_b}(B) \left(N_{b,p}(t) -N_{w,p}(t)-N_{p,D}(t)\right) \right. 
 		\notag \\ 
 		& -\ln M_{T_b}(B) N_{w,b}(0) -\ln M_{T_p}(B) N_{w,p}(0) \left. + g(B)\tau\wedge t \right].
 		\end{align}
 	Notice that $g(B)$ is analytic at $0$, with $g(0)=0$. Its derivative is
 	\begin{equation} \label{eq: critical condition}
 		g'(0) = 1 - w \left(\rho \mathbb{E}(T_b) + (1-\rho)\mathbb{E}(T_p)\right) = 1-\frac{w}{m} >0, 
 	\end{equation}
 	where $w = p_1+ 3p_2$. Therefore, there exists $\epsilon>0$, such that for $B \in (-\epsilon,0)$
 	\[g(B)<0.  \] 
 	
 	On the other hand, since the numbers of blue cups and purple cups are nonnegative, and they are both $0$ at the time $\tau$, we have:  \begin{align*} N_{w,b}(t)-N_{b,p}(t) \geq 0 \\ N_{b,p}(t) +N_{w,p}(t)-N_{p,D}(t) \geq 0
 	\end{align*} equalities both hold for $t\geq \tau$. Therefore, $M_t(B)$ is uniformly bounded in time $t\geq 0$ for $B\in(-\epsilon,0)$. Particularly, $M_{T_b}(B),M_{T_p}(B)\leq 1$ and 
 	\[M_t(B) \leq M_{T_b}(B)^{- N_{w,b}(0)} M_{T_p}(B)^{- N_{w,p}(0)} \leq \left(M_{T_b}(B) M_{T_p}(B)\right)^{-1}. \] 
 	By the Optional-Stopping Theorem, we have the moment generating function of $\tau$ for ${B\in (-\epsilon,0)}$ given by \eqref{eq: mgf for tau}. Again by \eqref{eq: critical condition}, we can extend the equation analytically.
 \end{proof}
 
 Now we can prove Theorem \ref{Thm: CLT for X_t}:
   \\
 \begin{proof} (Theorem \ref{Thm: CLT for X_t}):
 	 We can use the standard central limit theorem for renewal processes. For example, see arguments in the proof of Theorem 1.3 \cite{ASV}. We need to show both $\tau$ and $X_\tau$ have some positive finite exponential moments.
 	
 	 By Lemma \ref{lm: moment generating functions for tau}, for some $c_1>0$, we can solve ${g(B)=c_1}$ analytically and get the moment generating function of $\tau$ to be finite, 
 	\begin{equation} \label{eq: finite moment of tau}
 		\mathbb{E}^{\mu_\rho,d,2}\left[\exp\left(c_1\tau\right) \right] <\infty
 	\end{equation}
 	 To show $X_\tau$ also has finite exponential moments. We notice that $X_t = N_{p,D}(t)$, which is dominated by a Poisson process with rate $q_1$. Therefore, by \eqref{eq: finite moment of tau}, for some $c_2>0$,
 	  \begin{equation} \label{eq: finite moment of Xtau}
 	  \mathbb{E}^{\mu_\rho,d,2}\left[\exp\left(c_2 X_\tau\right) \right] <\infty.
	  \end{equation}
	  \end{proof}
	 
	\textcolor{black}{We end this section by discussing the moment generating function and the condition \eqref {eq: critical condition} for the regenerative structure.}
	 \begin{remark}\label{rm: regenerative structure} \begin{enumerate}
	 		\item In view of the law of large numbers of $X_t$, the condition $w<m$ is also necessary for $\tau$ to be finite a.s. To have this condition, we typically need $q_1$ to be large and $\rho, p_1$ to be small. Indeed, if $q_1$ is large but $\rho>1/3$, we have $m = (q_1^{-1} + \rho p^{-1}_2)^{-1}\approx p_2 \rho^{-1} <3 p_2 < w$. 
	 		
	 		\item For the case when $w\geq m$, the behavior of the fluctuation of $X_t$ is unclear. In fact, we can get a regenerative structure for the first color scheme and consider $\tilde{\tau}$ as the return time when there are only white cups. It has a finite first moment, see Remark \ref{rm: ergodic measure}.  As nonwhite cups move like symmetric random walks when the tagged particle is not moving, and creation of nonwhite cup is local at site 1, we might also expect $\tilde{\tau}$ to have finite second moments in the general case.
	 		
	 		\item We can compute the joint moment generating function of $\tau$, $X_\tau$ and also the speed explicitly with the exponential martingales \eqref{eq: exponential martingale}. For example, by ${X_\tau  = N_{w,b}(\tau)+ N_{w,p}(\tau)}$, ${N_{w,b}(0)+ N_{w,p}(0)=1}$, and taking $a=b,c=d=0$ in \eqref{eq: exponential martingale}, we can get, when $w<m$,
	 		\begin{equation}\label{eq: joint mgf}
	 		\mathbb{E}^{\mu_\rho,d,2}\left[\exp\left(b X_\tau - b- h(b)\cdot \tau\right) \right] =1,\end{equation}
	 		where $h(b) =  (p_2+p_1) \left(\exp(b)-1)\right) 
	 		+   p_2 \left(\exp(2b)-1\right)$. Taking derivatives with respect to $b$, we have
	 		\[\mathbb{E}^{\mu_\rho,d,2}\left[X_\tau - 1- h'(0)\cdot \tau \right] =0.\]
	 		Therefore, the speed of the tagged particle is
	 		\begin{equation*} 
	 		\frac{\mathbb{E}^{\mu_\rho,d,2}\left[X_\tau \right] }{\mathbb{E}^{\mu_\rho,d,2}\left[ \tau \right] } 
	 		= \frac{1}{\mathbb{E}^{\mu_\rho,d,2}\left[\tau \right]}+h'(0)
	 		= \frac{g'(0)}{\rho \mathbb{E}(T_b) + (1-\rho)\mathbb{E}(T_p)} +h'(0) =m,
	 		\end{equation*}
	 		which is the same as Theorem \ref{Thm: LLN for X_t}, when $w<m$.
	 	\end{enumerate}
	 \end{remark} 

\section{large deviation Principles} \label{sec: LDP}
In this section, we will prove the large deviation principle for the displacement $X_t$. The main step is to prove the large deviation principle for various jumps and empirical densities at site 1 jointly. Applying the contraction principle, we can get the desired result. We start from the various jumps and empirical densities at site 1.

\subsection{Various Jumps at Site 1, and Their LDP}
We use the first color scheme, where changes of colors only occur at site 1. \textcolor{black}{The various jumps correspond to changes among three colors at site 1 and empirical densities at site 1 are averaged occupation time of different colors at site 1.} We will see that the LDP for the various jumps and empirical densities is mainly a consequence of perturbations and a law of large numbers, see Lemma \ref{lm:Local LDP at positive good points}. By the graphical construction, the original color process can be constructed via event times which correspond to a collection of independent Poisson processes. Varying the jump rates of some Poisson processes, we can get a perturbed color process, which also has a law of large numbers for the various jumps and empirical densities. This has two consequences. First, an atypical event for various jumps and empirical densities under the original measure becomes typical under the new measure. Second, the Radon-Nikodym derivate between the two measures (in a suitable sense) is a function of various jumps and empirical densities at site 1. By the law of large numbers, this Radon-Nikodym derivate is almost a constant over the atypical event, which allows us to get the rate function. Some technical issues and detailed computation will be explained in the next subsection. 

\textcolor{black}{
We first recall the graphical construction of the auxiliary process under the first color scheme. Initially, all cups at positive sites are white. We change the positions and colors of cups at event times of a collection of independent Poisson processes $\mathcal{C}$, $\mathcal{D}$, and ${(\mathcal{N}_{x,y})_{x>y>0}}$. For perturbations, we will replace the Poisson process $\mathcal{C}$ by three independent Poisson processes $\mathcal{C}_{w,b}$, and $\mathcal{C}_{w,D}$, $\mathcal{C}_{p,D}$ with rates $\lambda_{w,b}$, $\lambda_{w,D}$, $\lambda_{p,D}$, respectively. We will also replace $\mathcal{D}$ by two independent Poisson processes $\mathcal{D}_{b,p}$, $\mathcal{D}_{w,p}$, with rates $\lambda_{b,p}$, $\lambda_{w,D}$, respectively. In the original process, these rates are ${\lambda_{w,b} = q_1 \rho}$, ${\lambda_{w,D} = q_1 (1-\rho)}$, ${\lambda_{p,D} = q_1}$, and ${\lambda_{w,p} = \lambda_{b,p}= p_2}$, while in perturbed process, these rates are some chosen positive numbers. Changes of colors and removals of cups always occur at site 1 and at event times of these Poisson processes: a white cup is colored blue at an event time of $\mathcal{C}_{w,b}$, it is removed and the configuration is shifted at an event time of  $\mathcal{C}_{w,D}$, it is colored purple at an event time $\mathcal{C}_{w,p}$; a blue cup is colored purple at an event time $\mathcal{C}_{b,p}$; a purple cup is removed and the configuration is shifted towards left at an event time $\mathcal{C}_{p,D}$. Change of positions of cups occur at sites $x,y$ at event times of $\mathcal{N}_{x,y}$. Similar to section \ref{sec: Regenerative Structure}, there are five processes corresponding to the total numbers of changes of colors and removal of cups at site 1 by time $t$: $n_{w,b}(t)$, $n_{w,p}(t)$, $n_{w,D}(t)$, $n_{b,p}(t)$, and $n_{p,D}(t)$. 
They have (varying) jump rates ${\lambda_{w,b}\cdot \mathbb{1}_{\{c(1)=w\}}}$, ${\lambda_{w,D}\cdot \mathbb{1}_{\{c(1)=w\}}}$, ${\lambda_{p,D}\cdot \mathbb{1}_{\{c(1)=p\}}}$, ${\lambda_{w,p}\cdot \mathbb{1}_{\{c(1)=w\}}}$, and ${\lambda_{b,p}\cdot \mathbb{1}_{\{c(1)=b\}}}$, respectively. We will also consider the empirical densities of three colors at site 1: ${m_w = \frac{1}{t}\int_{0}^{t}\mathbb{1}_{\{c_s(1)=w\}}\,ds}$, ${m_p = \frac{1}{t}\int_{0}^{t}\mathbb{1}_{\{c_s(1)=p\}}\,ds}$, and ${m_b = \frac{1}{t}\int_{0}^{t}\mathbb{1}_{\{c_s(1)=b\}}\,ds}$. We denote by $\vec{n}$ the tuple ${\left(n_{w,b}, n_{w,D}, n_{p,D}, n_{w,p}, n_{b,p}\right)}$, and by ${\vec{m}}$ the tuple ${\left(m_w,m_p,m_b \right)}$. Now we can describe the rate function for $\left(\frac{1}{t}\vec{n},\vec{m}\right)$.}

{

We say a point $x = (x_i) \in \mathbb{R}^8$ is "good" if it satisfies equations:
\begin{equation} \label{eq: good point}
x_1 -x_5 =0, -x_3+x_4+x_5 =0 ,  x_6+x_7+x_8=1,
\end{equation} which are analogues of \eqref{eq: marginal equations}. Let $ H(\cdot|\cdot)$ be a function from $\mathbb{R}^2$ to $\mathbb{R}\bigcup\{\infty\}$, defined as 
\begin{equation}
H(s|t) = \begin{cases}
s \ln s - s\ln t - s+t & \quad  \text{if }\  s,t \geq 0\\
+\infty & \quad \text{otherwise}
\end{cases},
\end{equation} with the convention that $0\cdot \ln 0 = 0,\ln 0 = -\infty$. The rate function $\mathbb{J}(\cdot)$ can be defined on $\mathbb{R}^8$ as
\begin{equation} \label{eq: rate function for joint}
\mathbb{J}(x) = \begin{cases}
H(x_1|\lambda_{w,b}\cdot x_6)+ H(x_2|\lambda_{w,D}\cdot x_6) &\quad \text{if } x \text{ is "good" } \\
\quad \quad+ H(x_3|\lambda_{p,D}\cdot x_7) + H(x_4|\lambda_{w,p}\cdot x_6)   \\
\quad \quad+ H(x_5|\lambda_{b,p}\cdot x_8) \\
+\infty & \quad \text{otherwise}
\end{cases}, 
\end{equation}
where $\lambda_{w,b} = q_1 \rho$, $\lambda_{w,D} = q_1 (1-\rho)$, $\lambda_{p,D} = q_1 $, $\lambda_{w,p} = \lambda_{b,p}= p_2$. It is easy to see functions $\mathbb{J}(\cdot)$ is a good rate function, which is a function with compact sublevel sets.

Now we state the main theorem of this section, the large deviation principle for $\left(\frac{1}{t}\vec{n},\vec{m}\right)$.
\begin{theorem} (Annealed LDP for Varaious Jumps and Empirical Densities at Site 1) \label{thm: LDP for joint}\\
	Under the assumption of Theorem \ref{Thm: LLN for X_t}, the various jumps and empirical densities at site 1 satisfy the LDP with the good rate function $\mathbb{J}(\cdot)$. For any Borel set $B \subset \mathbb{R}^8$,
	\begin{align} -\inf_{x\in B^o} \mathbb{J}(x) &\leq \liminf_{t\to \infty}  \frac{1}{t} \ln \mathbb{Q}^{\bar{\mu}_\rho,d,1}\left((\frac{1}{t}\vec{n},\vec{m}) \in B \right)  \notag \\
	&\leq \limsup_{t\to \infty}  \frac{1}{t} \ln \mathbb{Q}^{\bar{\mu}_\rho,d,1}\left((\frac{1}{t}\vec{n},\vec{m}) \in B \right) \leq -\inf_{x\in \overline{B}} \mathbb{J}(x) .
	\end{align}  
\end{theorem}

Since the displacement  $X_t$ of the tagged particle can be written as 
\begin{equation}\label{eq:ez relation}
X_t = n_{w,D}(t) + n_{p,D}(t). 
\end{equation} 
the displacement $X_t$ also satisfies an LDP with a rate function $\mathbb{I}(\cdot) : \mathbb{R} \to \mathbb{R}\bigcup\{\infty\} $ as
\begin{equation} \label{eq: rf from contraction principle}
\mathbb{I}(y) = \inf \{ \mathbb{J}(x): x_2 + x_3 = y \},
\end{equation}which is an application of the contraction principle. 
\begin{corollary}
	(Theorem \ref{thm: LDP}, Annealed LDP for the displacement $X_t$) \label{thm: LDP for displacemt}\\
	Under the assumption of Theorem \ref{Thm: LLN for X_t}, the displacement $X_t$ has the LDP with a good rate function $\mathbb{I}(\cdot)$. For any Borel set $C \subset \mathbb{R}$,
	\begin{align} -\inf_{y\in C^o} \mathbb{I}(y) &\leq \liminf_{y\to \infty}  \frac{1}{t} \ln \mathbb{Q}^{\bar{\mu}_\rho,d,1}\left(\frac{X_t}{t} \in C \right) \notag \\
	&\leq \limsup_{t\to \infty}  \frac{1}{t} \ln \mathbb{Q}^{\bar{\mu}_\rho,d,1}\left(\frac{X}{t} \in C \right) \leq -\inf_{y\in \overline{C}} \mathbb{I}(y) .
	\end{align}  
\end{corollary} 
\begin{proof}:
	Apply the contraction principle to equation \eqref{eq:ez relation}. See Theorem 4.2.1 \cite{DZ}.
\end{proof}

\textcolor{black}{
Before the proof of the LDP for various jumps, we briefly explain the rate function on "good" points.
\begin{remark} 
	The rate function on "good points" can be seen from the proof of Lemma \ref{lm:Local LDP at positive good points}, especially from the Radon-Nikodym derivative, \eqref{eq: r-n derivative}. It can be interpreted as the cost of perturbing the five various jumps jointly. The cost is a sum because they are disjoint, and each term is similar to that of Poisson processes, as the relative entropy. In principle, to change a various jump, which is time-inhomogeneous, we change two variables, a new rate $x_i = \tilde{\lambda}_i\cdot \mu_i$ and a reference rate $\lambda_i \mu_i$, where $\mu_i$ is some empirical density. In this case, we only need to change eight variables because all $\mu_i$ are empirical densities of colors at site 1, $\mu_i= x_j$ for some $j=6,7,8$. On the other hand, the law of large numbers in section \ref{sec: LLN for displacement} provides three extra equations, \eqref{eq: good point}. Thus, we have five degrees of freedom, and we can choose $\tilde{\lambda}$ for different $x$. A very similar phenomena can also be  observed in \cite{AJV}, where the joint LDP has the same form as the joint LDP of independent Poisson processes.
\end{remark}
}

\subsection{Three Lemmas for the LDP of Various Jumps}
We will divide the proof of Theorem \ref{thm: LDP for joint} into the following three lemmas. \textcolor{black}{These lemmas are standard.}

The first lemma is exponential tightness, it follows directly from $\vec{n}_i$ are dominated by Poisson processes, and empirical densities $m_i$ are bounded. 
\begin{lemma} (Exponential Tightness) \label{lm: exponential tightness}
	The distributions of $\left(\frac{1}{t}\vec{n},\vec{m}\right)$ are	exponentially tight. That is, for any $l>0$, exists a compact set $K_l \subset \mathbb{R}^8$, such that, for any closed set $C \subset K_l^c$,
	\begin{equation}
	\limsup_{t\to\infty} \frac{1}{t} \ln \mathbb{Q}^{\bar{\mu}_\rho,d,1}\left( (\frac{1}{t}\vec{n},\vec{m}) \in C \right) \leq -l.
	\end{equation} 
\end{lemma}
\begin{proof}:
	Since $\vec{n}_i$ are (stochastically) dominated by orthogonal Poisson processes with bounded rates, and the empirical densities are bounded by 1. We have 
	\[\limsup_{M\to \infty}\limsup_{t\to\infty} \frac{1}{t} \ln \mathbb{Q}^{\bar{\mu}_\rho,d,1}\left( \frac{1}{t}\vec{n}_i \geq M,\vec{m}_j\geq 1, \text{ for all i,j}  \right) = -\infty, \]
	which is sufficient.
\end{proof}

The second lemma says that the number of nonwhite cups has a sub-linear bound in time $t$. This lemma allows us to consider only the cases when $n_{w,p} + n_{b,p} \approx n_{p,D}$ and $n_{w,b} \approx n_{b,p}$, which correspond to $x$ are "good". The proof relies on exponential martingales and Chebyshev Inequality.
\begin{lemma}(Sub-linear Bound of Colored Cups) \label{lm:sublinear growh}
	Let $N_t$ be the (net) number of nonwhite cups at time $t$, that is $N_t= N_b(t) + N_p(t)$.  For any $\delta>0$, we have
	\begin{equation} \label{eq:key estimate for good points}
	\limsup_{t\to\infty} \frac{1}{t} \ln \mathbb{Q}^{\bar{\mu}_\rho,d,1}\left( \frac{N_t}{t} \geq \delta  \right) = -\infty.
	\end{equation}
\end{lemma}
\begin{proof}:
	We shall consider the weight function $\hat{W} = \sum_{i>0} i\cdot \mathbb{1}_{\{c(i)\neq w\}}$ from section \ref{sec: LLN for displacement}, which can be written as $\hat{W}=W_b+W_p$. Clearly, $\hat{W}(0)=0$ initially. There is a class of $\mathbb{Q}^{\bar{\mu}_\rho,d,1}$-exponential martingales, 
	\[M_t(a)= \exp\left(a\hat{W}(t)- \int_0^t e^{-a\hat{W}(s)}\tilde{L}_{d,1}e^{a\hat{W}(s)}\,ds \right). \]
	Therefore, by Cauchy-Swartz Inequality,
	\begin{align}
	\mathbb{E}^{\bar{\mu}_\rho,d,1}\left[\exp\left(a\hat{W}(t)\right) \right] =& \mathbb{E}^{\bar{\mu}_\rho,d,1}\left[M_t(2a)^{\frac{1}{2}} \exp\left( \frac{1}{2}\int_0^t e^{-2a\hat{W}(s)}\tilde{L}_{d,1}e^{2a\hat{W}(s)}\,ds\right) \right] \notag \\	
	\leq& \mathbb{E}^{\bar{\mu}_\rho,d,1}\left[ \exp\left(
	\int_0^t e^{-2a\hat{W}(s)}\tilde{L}_{d,1}e^{2a\hat{W}(s)}\,ds \right) \right]^{\frac{1}{2}}. \label{eq: C-S estimate}	\end{align}
	
	To estimate the last term, we notice $e^{-2a\hat{W}(s)}\tilde{L}_{d,1}e^{2a\hat{W}(s)}$ can be written into two pieces. The first piece corresponds to the symmetric exclusion, which is of the form
	\[ p_2 \left(e^{2a}-e^{-2a}\right)^2 B_2(s) + p_1\left(e^a-e^{-a}\right)^2 B_1(s), \] where $B_2(s), B_1(s)$ are bounded above by $N_s$
	. The second term corresponds to the boundary effects due to shifts of the configurations and interchanges near site 1. It is bounded above by
	\[c(q_1+p_1+p_2)(e^a-1)t\] for some constant $c>0$. Combining these two terms, and notice $N_s \leq n_{w,p}(s) + n_{w,b}(s)$, which is (stochastically) dominated by some Poisson process with a bounded rate, we have
	\begin{align}
	\mathbb{E}^{\bar{\mu}_\rho,d,1}\left[\exp\left(a\hat{W}(t)\right) \right] \leq& \exp(c(e^a-1)t) \mathbb{E}^{\bar{\mu}_\rho,d,1}\left[ \exp\left(
	\int_0^t A(a) N_s \,ds \right) \right]^{\frac{1}{2}} \notag \\
	\leq& \exp \left(c(e^a-1)t + c(e^{A(a)t}-1)t \right) \label{eq: C-S, P}
	\end{align}
	where $A(a) =  p_2 \left(e^{2a}-e^{-2a}\right)^2 +  p_1 \left(e^{a}-e^{-a}\right)^2$, which behaves quadratically when $a$ is small.
	
	Since $N_t \geq \delta t$ implies $\hat{W}_t \geq \frac{1}{10}(\delta t)^2$, by \eqref{eq: C-S estimate}, \eqref{eq: C-S, P} we can get
	\begin{align*}
	\mathbb{Q}^{\bar{\mu}_\rho,d,1}\left( \frac{N_t}{t} \geq \delta  \right)\leq& \mathbb{Q}^{\bar{\mu}_\rho,d,1}\left( \hat{W}_t \geq \frac{1}{10}(\delta t)^2  \right)	\notag \\
	\leq & \mathbb{E}^{\bar{\mu}_\rho,d,1}\left[\exp\left(a\hat{W}(t)\right) \right] \exp(-\frac{1}{10} \cdot a (\delta t)^2) \notag \\
	\leq & \exp\left(c(e^a-1)t + c(e^{A(a)t}-1)t -\frac{1}{10} \cdot a (\delta t)^2 \right).
	\end{align*} Choosing $a = t^{-\frac{1}{2}}$, and using $A(a)$ behaves quadratically near 0, we have, for large $t$,
	\begin{equation}
	\frac{1}{t}\ln\mathbb{Q}^{\bar{\mu}_\rho,d,1}\left( \frac{N_t}{t} \geq \delta  \right) \leq -ct^{\frac{1}{2}},
	\end{equation} for some $c>0$ depending on $\delta$. This is sufficient for the estimate \eqref{eq:key estimate for good points}.

\end{proof}

The last lemma provides a local estimate for the LDP rate function on "good" points $x$ with positive entries. This is a consequence of the Girsanov-type formula for the Radon-Nikodym derivatives, and the ergodic theorem at the end of section \ref{sec: LLN for displacement}. Particularly, we will see $x$ and $\tilde{\lambda}$ are related via equations \eqref{eq: parameter equation} and \eqref{eq: good point}.
\begin{lemma}(Local LDP Estimates) \label{lm:Local LDP at positive good points}
	Let $B(x,r)$ be a ball with center $x$ and radius $r$ in $\mathbb{R}^8$, with respect to the sup-norm. Suppose $x$ is a "good" point with positive entries, that is, $x$ satisfies the condition \eqref{eq: good point}, and $x_i>0$ for all $i$,  then we have
	\begin{equation} \label{eq: local estimate}
	\limsup_{r\downarrow 0}\limsup_{t\to\infty} \abs{\frac{1}{t} \ln \mathbb{Q}^{\bar{\mu}_\rho,d,1}\left(( \frac{1}{t}\vec{n},\vec{m}) \in B(x,r)  \right)+ \mathbb{J}(x) }= 0.
	\end{equation} 
\end{lemma} 
\begin{proof}:
	\textcolor{black}{In view of the graphical construction of the auxiliary process described at the beginning of this section, we will modify the jump rates of independent Poisson processes $\mathcal{C}_{w,b}$, $\mathcal{C}_{w,D}$ ,$\mathcal{C}_{p,D}$, $\mathcal{C}_{w,p}$ and $\mathcal{C}_{b,p}$ from  ${\lambda=(\lambda_{w,b},\lambda_{w,D}, \lambda_{p,D},\lambda_{w,p},\lambda_{b,p})}$ to ${\tilde{\lambda}= (\tilde{\lambda}_{w,b},\tilde{\lambda}_{w,D},\tilde{\lambda}_{p,D},\tilde{\lambda}_{w,p},\tilde{\lambda}_{b,p})}$. This allows us to get a perturbed color process. We denote by $\mathbb{Q}^{\tilde{\lambda},d,3}$ the corresponding probability measure on the space of c\'{a}dl\'{a}g paths on $\tilde{\mathbb{X}} =\{b,p,w\}^{\mathbb{Z}_+}$ when initial cups are all white. The perturbed process has a generator $\tilde{L}_{d,3}$, which acts on a local function $f$ by}
	\begin{equation*} \label{eq: L_d,3}
	\tilde{L}_{d,3}f(\eta) =\left(\tilde{S}^{ex}_{+,1} + \tilde{L}^{sh,3} + \tilde{L}^{d,3}\right)f(\eta), \end{equation*} 
	where
	\begin{align}
	\tilde{L}^{sh,3} f(\eta)=& \tilde{\lambda}_{w,D}\cdot \mathbb{1}_{\{c(1)=w\}}\left(f(\theta_1\circ C_{p,1}\eta)-f(\eta)\right) \notag \\
	&+ \tilde{\lambda}_{w,b}\cdot \mathbb{1}_{\{c(1)=w\}}\left(f(C_{b,1}\eta)-f(\eta)\right) \notag \\
	&+ \tilde{\lambda}_{p,D} \mathbb{1}_{\{c(1)=p\}}\left(f(\theta_1\eta)-f(\eta)\right) \notag\\		
	\tilde{L}^{d,3}f(\eta) =& \tilde{\lambda}_{w,p}\cdot \mathbb{1}_{\{c(1)=w\}} \left(f(C_{p,1}\eta)-f(\eta)\right) \notag \\
	&+\tilde{\lambda}_{b,p}\cdot \mathbb{1}_{\{c(1)=b\}} \left(f(C_{p,1}\eta)-f(\eta)\right). \notag
	\end{align}
	It is clear that when $\tilde{\lambda} = \lambda$, we recover the original process and its generator $\tilde{L}_{d,1}$. 
	
	When every jump rates $\tilde{\lambda}_i$ are positive, the measures of the two color processes (with $\sigma$-algebras generated by $(\eta_s)_{s\leq t}$) are absolutely continuous with respect to each other up to time $t$. The Radon-Nikodym derivative ${\Psi_t(\tilde{\lambda}) =\frac{d\mathbb{Q}^{\lambda,d,3} }{d\mathbb{Q}^{\tilde{\lambda},d,3}}}$ is
	
	\begin{align}\label{eq: r-n derivative}
	\Psi_t(\tilde{\lambda}) = \exp& \left(  \sum_{i=1,2,4} ( \ln \lambda_i -\ln \tilde{\lambda}_i )n_i - \int_0^t \sum_{i=1,2,4} (\lambda_i-\tilde{\lambda}_i)\cdot \mathbb{1}_{\{c_s(1)=w\}}\,ds 
	\right.
	\notag \\
	& \left.
	+ ( \ln \lambda_3 -\ln \tilde{\lambda}_3 )n_3 - \int_0^t (\lambda_3-\tilde{\lambda}_3)\cdot \mathbb{1}_{\{c_s(1)=p\}}\,ds   \right.			
	\\
	& \left. + (\ln \lambda_5 -\ln \tilde{\lambda}_5 )n_5 - \int_0^t (\lambda_5 - \tilde{\lambda}_5 )\cdot \mathbb{1}_{\{c_s(1)=p\}}\,ds \right). \notag
	\end{align}
	This is a function of various jumps and empirical densities at site 1. As a consequence, if we denote by $A_{x,r,t}$ the event  $(\frac{1}{t}\vec{n},\vec{m}) \in B(x,r)$ , we have the following estimate for $\Psi_t(\tilde{\lambda})$ on $A_{x,r,t}$, up to an error of size $r\cdot t$ (inside $\exp$),
	\begin{align} \label{eq: local estimate 2}
	\Psi_t(\tilde{\lambda}) = \exp & -\left\lgroup  \sum_{i=1,2,4} ( \ln \tilde{\lambda}_i - \ln \lambda_i )x_i - \sum_{i=1,2,4} (\tilde{\lambda}_i - \lambda_i)x_6 
	\right.
	\notag \\
	& \left.
	+ ( \ln \tilde{\lambda}_3 - \ln \lambda_3 )x_3 -  (\tilde{\lambda}_3 - \lambda_3)x_7   \right.			
	\\
	& \left. + ( \ln \tilde{\lambda}_5 - \ln \lambda_5 )x_5 - (\tilde{\lambda}_5 - \lambda_5)x_8 \right\rgroup \cdot t \notag
	\end{align}
	On the other hand, we can always choose positive $\{\tilde{\lambda}_i\}$ to solve equations 
	\begin{align}
	\tilde{\lambda}_i \cdot x_6 &= x_i, i =1,2,4,\notag \\
	\tilde{\lambda}_3 \cdot x_7 &= x_3, \label{eq: parameter equation}\\
	\tilde{\lambda}_5 \cdot x_8 &= x_5, \notag
	\end{align} when $x$ is a "good" point with positive entries. \textcolor{black}{Given this particular $\tilde{\lambda}$, we can show a law of large numbers for 
		$\mathbb{Q}^{\tilde{\lambda},d,3}$ as a consequence of positive $\{\tilde{\lambda}_i\}$. Following the proof in section \ref{sec: LLN for displacement}, we need to solve $\bar{\nu}_w$, $\bar{\nu}_p$, $\bar{\nu}_b$ from \eqref{eq: marginal equations} in terms of $\tilde{\lambda}$. In view of \eqref{eq: good point} and \eqref{eq: parameter equation}, $\bar{\nu}_w = x_6, \bar{\nu}_p= x_7, \bar{\nu}_b= x_8$ is the unique solution to \eqref{eq: marginal equations}. By equation \eqref{eq: ergodic theorem}, and analogues of \eqref{eq: LLN for jumps}, we get a law of large numbers for various jumps and empirical densities at site 1 and} 
	\begin{equation} \label{eq: general LLN}
	\lim_{t\to\infty}\mathbb{Q}^{\tilde{\lambda},d,3}\left(A_{x,r,t}\right) = 1.  \end{equation}
	 Furthermore, equation \eqref{eq: local estimate 2} becomes
	\begin{equation}
	\Psi_t(\tilde{\lambda}) = \exp  -t \left( \mathbb{J}(x) + O(r) \right)
	\end{equation}
	Therefore, we have
	\begin{align}
	\frac{1}{t}\ln \mathbb{Q}^{\lambda,d,3}\left(A_{x,r,t}\right) =& \frac{1}{t}\ln \int_{A_{x,r,t}}  \Psi_t(\tilde{\lambda}) \,d \mathbb{Q}^{\tilde{\lambda},d,3} \notag \\
	=&  \mathbb{J}(x) + O(r) + \frac{1}{t}\ln \mathbb{Q}^{\tilde{\lambda},d,3}\left(A_{x,r,t}\right)  ,
	\end{align} which is sufficient for estimate \eqref{eq: local estimate} from \eqref{eq: general LLN}.
\end{proof}

\subsection{Proof of the LDP of Various Jumps}
Now we can prove Theorem \ref{thm: LDP for displacemt}.

\begin{proof}: 
	We will divide the proof into three steps.
	\begin{enumerate}[label = S\arabic*.]
		\item Local LDP estimates for good points with positive entries:
		
		Notice that by any time $t$, the (net) numbers of revealed black cups and revealed purple cups are positive:
		\begin{align} \label{eq: positive growth}
		N_b= n_{w,b} -n_{b,D} \geq 0, \notag \\
		N_p = n_{w,p}+n_{b,p}-n_{p,D} \geq 0.  \end{align} 
		By Lemma \ref{lm:sublinear growh}, we only need to consider the asymptotic behavior of 
		\[\frac{1}{t} \ln \mathbb{Q}^{\bar{\mu}_\rho,d,1}\left(A_{x,r,t}  \right)=\frac{1}{t} \ln \mathbb{Q}^{\bar{\mu}_\rho,d,1}\left(( \frac{1}{t}\vec{n},\vec{m}) \in B(x,r)  \right)\] for "good" points $x$ with nonnegative entries. We want to show
		\begin{align} \label{eq: local LDP1}
		\lim_{r \downarrow 0} \limsup_{t\to\infty}\frac{1}{t} \ln \mathbb{Q}^{\bar{\mu}_\rho,d,1}\left(A_{x,r,t}  \right)
		\leq & -\mathbb{J}(x) \\
		\lim_{r \downarrow 0} \liminf_{t\to\infty}\frac{1}{t} \ln \mathbb{Q}^{\bar{\mu}_\rho,d,1}\left(A_{x,r,t}  \right)
		\geq & -\mathbb{J}(x) \label{eq: local LDP2}
		\end{align}
		
		The case for $x$ with positive entries is covered by Lemma \ref{lm:Local LDP at positive good points}. We only need to consider the case when some $x_i =0$. 
		
		\item Local LDP estimates for good points with some zero entries:
		
		The lower bound \eqref{eq: local LDP2} for a good point $x$ with some zero entries can be obtained by using a sequence of good points $\{x_n\}$ with positive entries. 
		
		\begin{align}
		\lim_{n\to\infty} x_n &= x,\\
		\lim_{n\to\infty} \mathbb{J}(x_n) &= \mathbb{J}(x). 
		\end{align} The existence of such a sequence $\{x_n\}$ is due to the regularity of $H(\cdot|\cdot)$ on its domain  and its boundary,
		\[H(s|t) = \liminf_{(a,b) \to (s,t)} H(a|b), \text{ for } s,t \geq 0. \] 
		Therefore, we can get, as $r_n \downarrow 0$,
		\begin{align*} &\lim_{r \downarrow 0} \liminf_{t\to\infty}\frac{1}{t} \ln \mathbb{Q}^{\bar{\mu}_\rho,d,1}\left(A_{x,r,t}\right) \\
		\geq &\lim_{n\to \infty} \liminf_{t\to\infty}\frac{1}{t} \ln \mathbb{Q}^{\bar{\mu}_\rho,d,1}\left(A_{x_n,r_n,t}\right)\\
		\geq &\lim_{n\to \infty}  -\mathbb{J}(x_n)  = -\mathbb{J}(x).
		\end{align*}
		
		On the other hand, the upper bound \eqref{eq: local LDP1} follows from an argument similar to the proof of Lemma \ref{lm:Local LDP at positive good points}. By estimate \eqref{eq: local estimate 2}, we have, for positive $\tilde{\lambda}_i >0$,
		\begin{align*}
		\mathbb{Q}^{\bar{\mu}_\rho,d,1}\left(A_{x,r,t}\right) \leq \sup_{A_{x,r,t}}& \Psi_t(\tilde{\lambda})\\
		= \exp & -\left\lgroup  \sum_{i=1,2,4} ( \ln \tilde{\lambda}_i - \ln \lambda_i )x_i - \sum_{i=1,2,4} (\tilde{\lambda}_i - \lambda_i)x_6 
		\right.
		\notag \\
		& \left.
		+ ( \ln \tilde{\lambda}_3 - \ln \lambda_3 )x_3 -  (\tilde{\lambda}_3 - \lambda_3)x_7   \right.			
		\\
		& \left. + ( \ln \tilde{\lambda}_5 - \ln \lambda_5 )x_5 - (\tilde{\lambda}_5 - \lambda_5)x_8 +O(r) \right\rgroup \cdot t.  \notag
		\end{align*} Then, for positive $\tilde{\lambda}_i >0$, we can get,
		\begin{align*}
		\lim_{r \downarrow 0} \limsup_{t\to\infty}\frac{1}{t} \ln \mathbb{Q}^{\bar{\mu}_\rho,d,1}\left(A_{x,r,t}\right) \leq& -\left\lgroup \sum_{i=1,2,4} ( \ln \tilde{\lambda}_i - \ln \lambda_i )x_i - \sum_{i=1,2,4} (\tilde{\lambda}_i - \lambda_i)x_6 \right.
		\notag \\
		&
		+ ( \ln \tilde{\lambda}_3 - \ln \lambda_3 )x_3 -  (\tilde{\lambda}_3 - \lambda_3)x_7  			
		\\
		& \left. + ( \ln \tilde{\lambda}_5 - \ln \lambda_5 )x_5 - (\tilde{\lambda}_5 - \lambda_5)x_8 \right\rgroup
		\end{align*} Minimizing over $\tilde{\lambda}_i$, we get the upper bound \eqref{eq: local LDP1} via the explicit formula \eqref{eq: rate function for joint}.
		
		\item Full LDP:
		
		The extension from local LDP estimate to the full LDP is standard, see Theorem 4.1.11 \cite{DZ}.	By the local LDP estimates \eqref{eq: local LDP1}, \eqref{eq: local LDP2} and Lemma \ref{lm: exponential tightness}, we have the full large deviation principle for various jumps and empirical densities at site 1.
	\end{enumerate}
\end{proof}

 \end{document}